\newcommand{\F}{{\mathbb F}}
\newcommand{\Z}{{\mathbb Z}}
\newcommand{\PP}{{\mathbb P}}
\DeclareMathOperator{\PGL}{PGL}
\DeclareMathOperator{\PSL}{PSL}
\DeclareMathOperator{\Gal}{Gal}
\newcommand{\Fo}{{{\mathbb F}_q}}
\newcommand{\Ft}{{{\mathbb F}_{q^2}}}
\newcommand{\matt}[4]{{\begin{pmatrix} {#1} & {#2} \\ {#3} & {#4} \end{pmatrix}}}
\newcommand{\Fqk}{{{\mathbb F}_{q^{k}}}}
\newcommand{\ZNZ}{{{\mathbb Z}/N{\mathbb Z}}}
\begin{document}

\title{On the discrete logarithm problem in\\ finite fields of fixed characteristic}
 
\author{Robert Granger\inst{1}\thanks{Supported by the Swiss National Science Foundation via grant number 200021-156420.}%
\and Thorsten Kleinjung\inst{2}\thanks{This work was mostly done while the author was with the Laboratory for Cryptologic Algorithms, EPFL, Switzerland, supported by the Swiss National Science Foundation via grant number 200020-132160.}%
\and Jens Zumbr\"agel\inst{1}\thanks{This work was mostly done while the author was with the Institute of Algebra, TU Dresden, Germany, supported by the Irish Research Council via grant number ELEVATEPD/2013/82.}}

\institute{Laboratory for Cryptologic Algorithms\\
  School of Computer and Communication Sciences\\
  \'Ecole polytechnique f\'ed\'erale de Lausanne, Switzerland\\
  \and
  Institute of Mathematics, Universit\"at Leipzig, Germany\\
  \email{\{robert.granger,thorsten.kleinjung,jens.zumbragel\}@epfl.ch}}

\maketitle

\begin{abstract}
For~$q$ a prime power, the discrete logarithm problem (DLP) in~$\F_{q}$ consists in finding, for any $g \in \F_{q}^{\times}$ and $h \in \langle g \rangle$, an integer~$x$ such that $g^x = h$.
We present an algorithm for computing discrete logarithms with which we prove that for each prime~$p$ there exist infinitely many explicit extension fields~$\F_{p^n}$ in which the DLP can be solved in expected quasi-polynomial time.
Furthermore, subject to a conjecture on the existence of irreducible polynomials of a certain form, the algorithm solves the DLP in all extensions~$\F_{p^n}$ in expected quasi-polynomial time.
\end{abstract}


\section{Introduction}\label{sec:intro}

In this paper we prove the following result.

\begin{theorem}\label{thm-sequence}
For every prime $p$ there exist infinitely many explicit extension fields $\F_{p^n}$ in which the DLP can be solved in expected quasi-polynomial time 
\begin{equation}\label{thm1:complexity}
\exp \big( ( 1 / \log 2 + o(1) ) (\log n)^2 \big) .
\end{equation}
\end{theorem}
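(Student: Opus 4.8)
The plan is to work inside the "descent" framework pioneered by Barbulescu--Gaudry--Joux--Thom\'e and Granger--Kleinjung--Zumbr\"agel, embedding the target field $\F_{p^n}$ into a suitable field $\F_{q^{2k}}$ with $q \approx p^{\ell}$ and $k \le q+1$, so that the smallest subfield $\F_{q^2}$ plays the role of a factor base of polynomial size. The key structural tool is the action of $\PGL_2(\F_q)$ on $\PP^1$, which supplies, for many elements $h$ of degree $d$ over $\F_q$, a relation expressing $h$ (or a power of $h$) in terms of elements of degree roughly $d/2$: one intersects the divisor of $h$ with the family of low-degree functions of the form $X^q Y - X Y^q$ twisted by a matrix, obtaining an identity in the group algebra that, after taking logarithms, reduces the degree of the elements whose logarithm is unknown. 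Iterating this halving step $O(\log n)$ times, starting from a degree-$n$ element and ending at the factor base, gives a descent tree of depth $O(\log n)$ and branching bounded by a polynomial in $q$, hence a total of $q^{O(\log n)} = \exp(O((\log n)^2))$ nodes, which is quasi-polynomial once $q$ is itself quasi-polynomial in $n$; the constant $1/\log 2$ comes precisely from the branching factor at each level of the binary descent.

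First I would fix, for each prime $p$, an explicit infinite family of pairs $(q,k)$ with $n = k \cdot [\F_q : \F_p]$ (for instance $q = p^{\lceil \log_p something \rceil}$ chosen so that $q \approx n$ and $k \approx \sqrt{n}$, or more simply $q$ a power of $p$ slightly larger than $k$ with $k \mid q \pm 1$), so that $\F_{p^n} \subseteq \F_{q^{2k}}$ and $q$ is polynomially bounded in $n$; this is what lets us restrict to \emph{explicit} fields and sidesteps the conjecture on irreducible polynomials needed for the general-$n$ statement alluded to in the abstract. Second, I would establish the one-step degree-halving descent: given $h \in \F_{q^{2k}}^\times$ represented by a polynomial of degree $d$, show that for a positive proportion of matrices $m \in \PGL_2(\F_q)$ the twisted polynomial $h^{(m)}$ combines with the $\PGL_2(\F_q)$-translates of $X^qY - XY^q$ to yield a multiplicative relation whose right-hand side is a product of polynomials of degree $\le \lceil d/2 \rceil$, possibly after multiplying $h$ by a bounded power; here one needs a smoothness/coprimality argument and a count of the good matrices. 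Third, I would handle the base case: logarithms of all elements of degree $\le$ some constant (in particular the factor base $\F_{q^2}$) are computed by a separate polynomial-time procedure (linear algebra over the relations coming directly from the $\PGL_2$ action on linear and quadratic elements). Finally, I would assemble the recursion, bound the number of nodes in the descent tree by $\exp((1/\log 2 + o(1))(\log n)^2)$ using $q = n^{O(1)}$, and bound the cost per node (polynomial in $q$, dominated by testing matrices and doing small linear-algebra solves) so that the total expected running time matches~\eqref{thm1:complexity}.

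The main obstacle I expect is the descent step itself, specifically proving that enough matrices in $\PGL_2(\F_q)$ produce a \emph{balanced} factorization — i.e. that the cofactor appearing on the right-hand side of the $\PGL_2$-relation, after dividing out the known low-degree pieces, actually splits into irreducible factors each of degree at most half that of $h$ — rather than producing one large irreducible cofactor. Controlling this requires either a clean algebraic identity that forces the degree drop deterministically, or a heuristic-free counting argument (e.g. via Weil-type bounds on the number of $\F_q$-points, or an explicit construction exploiting the specific shape of $q$ and $k$ in our chosen family) showing the proportion of bad matrices is bounded away from $1$; getting this unconditionally for the explicit family, without invoking the irreducible-polynomial conjecture, is the crux. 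A secondary technical point is ensuring the descent terminates at the factor base rather than stalling at small but still-super-constant degrees, which I would address by a slightly different "final descent" handling the last few levels, possibly allowing larger branching there since only $O(1)$ levels remain and the extra cost is absorbed into the $o(1)$.
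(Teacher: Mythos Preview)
Your high-level strategy — a recursive degree-halving descent driven by the $\PGL_2(\F_q)$ action, with the one-step elimination certified by a Weil-bound argument — is the same engine the paper uses (this is the content of the paper's Theorem~\ref{thm-main} and Theorem~\ref{thm-onthefly}). Where your proposal falls short is precisely in the part that distinguishes Theorem~\ref{thm-sequence} from the conditional all-$n$ statement: the \emph{choice of the explicit infinite family of fields}.

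The paper does not try to embed an arbitrary $\F_{p^n}$ into some $\F_{q^{2k}}$ and then hope to find a suitable field representation; instead it reverses the logic. It fixes $k=18$, takes $q=p^i$ with $i$ ranging over an infinite set (odd $i$ when $p=2$, so that $q$ is not a power of $4$), and uses \emph{Kummer theory}: for every such $q$ and $k$ there exists $a\in\F_{q^k}$ with $X^{q-1}-a$ irreducible, so one may take $h_1=1$, $h_0=aX$, $I=X^{q-1}-a$, and $l=q-1$. This gives the explicit family $n=18\,i\,(p^i-1)$. With these parameters, Theorem~\ref{thm-main} yields running time $q^{\log_2 l+O(k)}=p^{i(\log_2(p^i-1)+O(1))}$, and since $\log n\sim i\log p$ this is $\exp\bigl((1/\log 2+o(1))(\log n)^2\bigr)$. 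The Kummer step is the entire content of the deduction; your proposal never identifies it and instead offers parameter guesses (``$q\approx n$ and $k\approx\sqrt{n}$'', or ``$k\mid q\pm1$'') that are either too vague or actively wrong — with $k\approx\sqrt{n}$ the $q^{O(k)}$ overhead alone is already $\exp(\sqrt{n}\log n)$, far from quasi-polynomial, and the factor base would have subexponential size.

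A second, smaller issue: your description of the one-step descent (``cofactor splits into irreducible factors each of degree at most half'') is the heuristic/smoothness picture from the Barbulescu--Gaudry--Joux--Thom\'e approach. The paper deliberately avoids smoothness heuristics. Its elimination step works by passing to the extension $\F_{q^{kd}}$, where a degree-$2d$ irreducible $Q$ becomes a product of conjugate \emph{quadratics}; one of these is eliminated via the $\PGL_2$-translate family $X^{q+1}+aX^q+bX+c$, and then norms are taken back down. The guarantee that the degree-two elimination succeeds is an algebraic statement (Theorem~\ref{thm-onthefly}), proved by classifying the possible $\PGL_2(\F_q)$-stabilisers of an irreducible factor of an auxiliary curve and then applying the Weil bound — there is no probabilistic smoothness argument over cofactors of uncontrolled degree. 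Your ``balanced factorisation'' worry is thus aimed at the wrong mechanism; the paper's mechanism is deterministic in its degree control and the Weil bound enters only to count good $(a,u)$ pairs.
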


Theorem~\ref{thm-sequence} is an easy corollary of the following much stronger result, which we prove by presenting a randomised algorithm for solving any such DLP.

\begin{theorem}\label{thm-main}
Given a prime power $q>61$ that is not a power of~$4$, an integer~${k \ge 18}$, coprime polynomials $h_0, h_1 \in \F_{q^k}[X]$ of degree at most two and an irreducible degree~$l$ factor~$I$ of ${h_1 X^q - h_0}$, the DLP in $\F_{q^{kl}} \cong \F_{q^k}[X]/(I)$ can be solved in expected time
\begin{equation}\label{thm2:complexity}
 q^{\log_2 l + O( k )}.
\end{equation} 
\end{theorem}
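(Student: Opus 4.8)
The strategy is the now-standard descent framework for quasi-polynomial DLP: given a target element represented by a polynomial in $\F_{q^k}[X]/(I)$, we build a recursive descent tree in which each node—an irreducible polynomial of degree $d$ over $\F_{q^k}$—is rewritten (``descended'') as a product of polynomials of degree at most $\lceil d/2 \rceil$, so that after $O(\log_q(kl)) = O(\log_2 l + O(k))$ levels we reach degree-one elements, i.e.\ the factor base. Multiplying the per-node cost (polynomially bounded in $q$ and $k$) by the tree size $q^{O(k)}$ raised to the number of levels yields the claimed $q^{\log_2 l + O(k)}$ bound. So the real content is a single \emph{descent step}: given an irreducible $Q \in \F_{q^k}[X]$ of degree $d \ge 2$, express $\log Q$ as an $\F_q$-linear combination of logarithms of polynomials of degree $\le \lceil d/2 \rceil$, in expected time $q^{O(k)}$.

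**The descent step.**
First I would set up the systematic equation. The hypothesis gives an irreducible factor $I \mid h_1 X^q - h_0$, so in $\F_{q^{kl}}$ we have the identity $X^q \equiv h_0/h_1$, equivalently $h_1(X) \cdot \prod_{\alpha \in \F_q}(X-\alpha) = \prod_{\alpha}(h_1 X - \alpha h_1) \equiv h_1 \cdot X^q - (\sum \cdots)$—more precisely, using $\prod_{\alpha \in \F_q}(Y-\alpha) = Y^q - Y$ with $Y$ a Möbius-type substitution, one obtains a supply of multiplicative relations among low-degree polynomials and their images under $X \mapsto (aX+b)/(cX+d)$ for $\matt{a}{b}{c}{d} \in \PGL_2(\F_{q^k})$. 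For the element $Q$ to be descended one considers, for a matrix $m \in \PGL_2(\F_{q^k})$, the polynomial obtained by clearing denominators in $\prod_{\alpha \in \PP^1(\F_q)} (m \ast (X-\cdot))$ evaluated appropriately; substituting $X^q = h_0/h_1$ rewrites one side as a polynomial of degree roughly $2d$ (from $h_0,h_1$ having degree $\le 2$) while the other side is, up to the $\F_q$-rational transforms, built from translates of $Q$. One then asks that the degree-$2d$ side \emph{splits} into irreducible factors each of degree $\le \lceil d/2\rceil$. Heuristically this happens for a positive proportion of $m$, but to make it unconditional one uses the freedom in $m$ together with a counting/sieving argument over the $q^{3k}$-sized group $\PGL_2(\F_{q^k})$, collecting enough splitting relations to solve a linear system whose unknowns are $\log Q$ and the logs of the degree-$\le\lceil d/2\rceil$ factors; the numeric hypotheses $q>61$, $q$ not a power of $4$, $k\ge18$ are exactly what is needed to guarantee (via an explicit lower bound on the number of ``good'' $m$, by an argument with Weil-type estimates on the relevant curves/varieties, or a careful combinatorial count) that the system has full rank and a solution exists.

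**The main obstacle.**
The hard part is making the descent step \emph{provable} rather than heuristic: one must show that the set of matrices $m \in \PGL_2(\F_{q^k})$ for which the transformed polynomial of degree $\approx 2d$ factors into pieces of degree $\le \lceil d/2\rceil$ is both nonempty and large enough to yield a solvable (full-rank) linear system—and this has to hold for \emph{all} $d$ down the recursion, including the delicate small cases $d=2,3,4$ where ``$\le\lceil d/2\rceil$'' is very restrictive. I expect the proof to isolate a combinatorial/geometric lemma bounding from below the number of such $m$, whose proof is where the precise thresholds on $q$ and $k$ are extracted, and then a linear-algebra lemma ensuring these relations span the needed logarithm. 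A secondary technical point is handling the factor-base elements themselves (degree-one polynomials): their logarithms must be computed, or shown to be linearly related via the systematic equations, within the same time bound—this uses the $h_1 X^q - h_0$ relation directly with $\PGL_2(\F_{q^k})$-translates.

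**Assembling the complexity.**
Finally I would assemble the bound: starting from $Q = I$ of degree $l$ (or from an arbitrary target element, first randomised to a product of degree-$O(1)$ polynomials by the usual smoothing step), the descent tree has depth $\log_2 l + O(\log_q k)$ levels to reach degree $1$, each internal node has $q^{O(k)}$ children (the number of relations collected per descent), and each node costs $\mathrm{poly}(q,k) = q^{O(k)}$ for the linear algebra; the product telescopes to $q^{\log_2 l + O(k)}$, which is \eqref{thm2:complexity}. Theorem~\ref{thm-sequence} then follows by choosing, for each $p$, an infinite family with $q = p^{c}$, $k = O(1)$, and $n = kl$ with $l$ growing, specialising \eqref{thm2:complexity} to \eqref{thm1:complexity}.
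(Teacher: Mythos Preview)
Your outline describes the Barbulescu--Gaudry--Joux--Thom\'e (BGJT) descent rather than the algorithm the paper actually analyses, and the difference is exactly where rigour lives. In your step you vary $m\in\PGL_2(\F_{q^k})$, collect many relations, and assert that ``the numeric hypotheses $q>61$, $q$ not a power of~$4$, $k\ge 18$ are exactly what is needed to guarantee \dots\ that the system has full rank.'' That full-rank statement is precisely the open heuristic in BGJT; no one knows how to prove it, and these thresholds do not come from such an argument. The paper's contribution is to \emph{avoid} the linear system entirely. Given an irreducible $Q\in\F_{q^k}[X]$ of degree $2d$, it passes to the extension $\F_{q^{kd}}$, where $Q$ factors into $d$ Galois-conjugate irreducible \emph{quadratics}, and then performs a single provable degree-two elimination of one such quadratic $Q'$ over $\F_{q^{kd}}$, using the Bluher parametrisation $X^{q+1}-BX+B$ and the lattice $L_{Q'}$; taking norms back to $\F_{q^k}$ yields at most $q+2$ irreducible factors of degree dividing $d$. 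The rigour comes from Theorem~\ref{thm-onthefly}: one shows that a specific plane curve in variables $(A,U)$ has an absolutely irreducible component defined over $\F_{q^{kd}}$, by analysing the action of $\PGL_2(\F_q)$ (not $\PGL_2(\F_{q^k})$) on the $U$-variable and running through the Dickson classification of its subgroups; the Weil bound then gives enough points. The conditions $q>61$ and $q$ not a power of $4$ are needed to exclude the sporadic subgroups $A_4,S_4,A_5$ and the $\PGL_2(\F_{q'})$ case in characteristic~$2$, while $k\ge 18$ makes the Weil error term small enough.

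There is also a complexity slip. If, as you write, each node had $q^{O(k)}$ children and the tree had depth $\log_2 l$, the total would be $\bigl(q^{O(k)}\bigr)^{\log_2 l}=q^{O(k\log_2 l)}$, not $q^{\log_2 l+O(k)}$; nothing ``telescopes.'' The paper obtains the correct bound because the branching factor is $q+2$ (polynomial in $q$, independent of $k$), so the tree has $(q+2)^{O(\log_2 l)}=q^{\log_2 l+O(1)}$ leaves; the $q^{O(k)}$ factor enters only once, from the $q^{2k}$-sized factor base and the outer index-calculus loop.
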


To deduce Theorem~\ref{thm-sequence} from Theorem~\ref{thm-main}, note that thanks to Kummer theory, when $l = q - 1$ such $h_0, h_1$ are known to exist; indeed, for all~$k$ there exists an $a \in \F_{q^k}$ such that $I = X^{q-1} - a \in \F_{q^k}[X]$ is irreducible and therefore $I \mid X^q - a X$. By setting $q = p^i > 61$ for any $i \ge 1$ (odd for $p = 2$),  $k = 18$, $l = q - 1 = p^i - 1$ and finally $n = i k (p^i - 1)$, applying~(\ref{thm2:complexity}) proves that the DLP in this representation of $\F_{p^n}$ can be solved in expected time~(\ref{thm1:complexity}). As one can compute an isomorphism between any two representations of $\F_{p^n}$ in polynomial time~\cite{LenstraIso}, this completes the proof.
Observe that one may replace the prime~$p$ in Theorem~\ref{thm-sequence} by a (fixed) prime power $p^r$ by setting $k = 18 r$ in the argument above.

In order to apply Theorem~\ref{thm-main} to the DLP in $\F_{p^n}$ with~$p$ fixed and arbitrary~$n$, one should first embed the DLP into one in an appropriately chosen $\F_{q^{kn}}$. By this we mean that $q = p^i$ should be at least $n-2$ (so that $h_0,h_1$ may exist) but not too large, and that $18 \le k = o(\log q)$, so that the resulting 
complexity~(\ref{thm2:complexity}) is given by~(\ref{thm1:complexity}) as $n \rightarrow \infty$. Proving that appropriate
$h_0,h_1 \in \F_{q^k}[X]$ exist for such $q$ and $k$ would complete our approach and prove the far stronger result that the DLP in~$\F_{p^n}$ with~$p$ fixed can be solved in expected time~(\ref{thm1:complexity}) for all $n$.
However, this seems to be a very hard problem, even if heuristically it would appear to be almost certain.  

Note that if one could prove the existence of an infinite sequence of primes $p$ (or more generally prime powers) for which $p-1$ is quasi-polynomially smooth in $\log p$, then the Pohlig-Hellman algorithm~\cite{PohligHellman} would also give a rigorous -- and deterministic -- quasi-polynomial time algorithm for solving the DLP in such fields, akin to Theorem~\ref{thm-sequence}. 
However, such a sequence is not known to exist and even if it were, Theorem~\ref{thm-sequence} is arguably more interesting since the present algorithm exploits properties of the fields in question rather than just the factorisation of the order of their multiplicative groups. 
Furthermore, the fields to which the algorithm applies are explicit, whereas it may be very hard to find members of such a sequence of primes (or prime powers), should one exist.

The first (heuristic) quasi-polynomial algorithm for discrete logarithms in finite fields of fixed characteristic was devised by Barbulescu, Gaudry, Joux and Thom\'e~\cite{BGJT}, building upon an approach of Joux~\cite{Joux13a}.
We emphasise that the quasi-polynomial algorithm presented here relies on a different principal building block, whose roots may be found in the work of G\"olo\u{g}lu, Granger,
McGuire and Zumbr\"agel~\cite{GGMZ13a}.
In contrast to the algorithm of Barbulescu \emph{et al.}, the present algorithm eliminates the need for smoothness heuristics; this feature as well as
the algebraic nature of the algorithm makes a rigorous analysis possible.

The sequel is organised as follows.  In Section~\ref{sec:algorithm} we present the algorithm, which involves the repeated application of what is referred to as a descent.  In Section~\ref{sec:descent} we describe our descent method, provide details of its building block and explain why its successful application implies Theorem~\ref{thm-main}, and hence Theorem~\ref{thm-sequence}.  Finally, in Section~\ref{sec:correctness} we complete the proof of these theorems by demonstrating that every step of each descent is successful.


\section{The algorithm}\label{sec:algorithm}

As per Theorem~\ref{thm-main}, let $q>61$ be a prime power that is not a power of~$4$ and let $k \ge 18$ be an integer; the reasons for these bounds are explained in Sections~\ref{sec:descent} and~\ref{sec:correctness}.  We also assume there exist $h_0, h_1, I \in \F_{q^k}[X]$ satisfying the conditions of Theorem~\ref{thm-main}.
Finally, let $g \in \smash{\F_{q^{kl}}^{\times}}$ and let $h \in \langle g \rangle$ be the target element for the DLP to base $g$.

The structure and analysis of the algorithm closely follows the approach of Diem in the context of the elliptic curve DLP~\cite{Diem},
which is based on that of Enge and Gaudry~\cite{EngeGaudry}.  However, a difference is that it obviates the need to factorise the group order. \medskip

{\sf\noindent 
Input: A prime power $q>61$ that is not a power of~$4$; an integer $k \ge 18$; a positive integer $l$; polynomials $h_0, h_1, I \in \F_{q^k}[X]$ with $h_0, h_1$ being coprime, $\deg(h_0), \deg(h_1) \le 2$ and $I$ a degree $l$ irreducible factor of $h_1 X^q - h_0$; $g \in \F_{q^{kl}}^{\times}$ and $h \in \langle g \rangle$. \medskip

\noindent Output: An integer $x$ such that $g^x = h$.

\begin{enumerate}[\quad 1.]
\item Let $N = q^{kl}-1$, let $\mathcal{F} = \{ F \in \Fqk[X] \mid \deg F \le 1,\, F \ne 0 \} \cup \{ h_1 \}$ and denote its elements by $F_1, \ldots, F_m$, where $m = |\mathcal{F}| = q^{2k}$ (or $q^{2k} - 1$ if $\deg h_1 \le 1$).
\item\label{step:construct} Construct a matrix $R = (r_{i,j}) \in (\ZNZ)^{(m+1) \times m}$ and column vectors $\alpha,\beta \in (\ZNZ)^{m+1}$ as follows.
For each $i$ with $1 \le i \le m+1$ choose $\alpha_i, \beta_i\in \ZNZ$ uniformly and independently at random and apply the (randomised) descent algorithm of Section~\ref{sec:descent} to $g^{\alpha_i}h^{\beta_i}$ to express this as\vspace{-2mm}
\[ g^{\alpha_i} h^{\beta_i} = \prod_{j=1}^m ( F_j \bmod I )^{r_{i,j}} . \vspace{-2mm} \]
\item Compute a lower row echelon form $R'$ of $R$ by using invertible row transformations; apply these row transformations also to $\alpha$ and $\beta$, and denote the results by $\alpha'$ and $\beta'$.
\item If $\gcd(\beta_1',N)>1$, go to Step~\ref{step:construct}.
\item Return an integer $x$ such that $\alpha_1' + x \beta_1' \equiv 0 \pmod{N}$.
\end{enumerate}} \smallskip

We now explain why the algorithm is correct and discuss the running time, treating the descent in Step 2 as a black box algorithm for now.  Henceforth, we assume that any random choices used in the descent executions are independent from each other and of the randomness of~$\alpha$ and~$\beta$.
For the correctness, note that~$g^{\alpha_1'} h^{\beta_1'} = 1$ holds after Step 3, since the first row of~$R'$ vanishes.  Thus for any integer~$x$ such that $\alpha_1' + x \beta_1' \equiv 0 \pmod{N}$ we have $g^x = h$, provided that~$\beta_1'$ is invertible in $\Z / N \Z$.

\begin{lemma}
  After Step~3 of the algorithm the element $\beta_1' \in \Z / N \Z$ is uniformly distributed.  Therefore, the algorithm succeeds with probability $\varphi(N) / N$, where $\varphi$ denotes Euler's phi function.
\end{lemma}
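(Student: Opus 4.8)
The plan is to prove that $\beta_1'$ is uniformly distributed on $\Z/N\Z$ by conditioning on the matrix $R$, and then to read off the success probability. Since Step~3 applies only invertible row transformations, there is a matrix $T \in \mathrm{GL}_{m+1}(\Z/N\Z)$ with $R' = TR$, $\alpha' = T\alpha$ and $\beta' = T\beta$; moreover the reduction of $R$ to echelon form depends on $R$ alone, so $T = T(R)$. Writing $t = (t_1,\dots,t_{m+1})$ for the first row of $T$, we have $\beta_1' = t\cdot\beta = \sum_i t_i\beta_i$. It therefore suffices to show (i) that conditional on $R$ the vector $\beta$ is still uniform on $(\Z/N\Z)^{m+1}$, and (ii) that for every fixed value of $R$ the linear form $t\cdot\beta$ is uniform on $\Z/N\Z$; for then $\beta_1'$ is uniform unconditionally.

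For (i), recall that the $i$-th row of $R$ is produced by running the descent on the input $g^{\alpha_i}h^{\beta_i}$ with internal randomness $\rho_i$ independent of everything else, so $R$ is a deterministic function of the data $\big(g^{\alpha_i}h^{\beta_i},\rho_i\big)_{1\le i\le m+1}$. Fix $i$ and a value $b\in\Z/N\Z$; conditional on $\beta_i = b$ the map $u\mapsto u\,h^b$ is a bijection of $\langle g\rangle$, so $g^{\alpha_i}h^b$ is uniform on $\langle g\rangle$, a distribution not depending on $b$. Hence $\beta_i$ is independent of $g^{\alpha_i}h^{\beta_i}$, and since $\rho_i$ is independent of $(\alpha_i,\beta_i)$ and the triples $(\alpha_i,\beta_i,\rho_i)$ are mutually independent, the vector $\beta$ is independent of $\big(g^{\alpha_i}h^{\beta_i},\rho_i\big)_i$, hence of $R$. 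Thus, conditional on $R$, the coordinates $\beta_1,\dots,\beta_{m+1}$ are independent and uniform.

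For (ii), fix $R$ and a prime $p\mid N$, and let $p^e$ be the exact power of $p$ dividing $N$. As $\det T\in(\Z/N\Z)^\times$, its reduction modulo $p$ is nonzero, so $T\bmod p$ lies in $\mathrm{GL}_{m+1}(\F_p)$ and in particular has nonzero first row; thus some coordinate $t_j$ is a unit modulo $p$, hence a unit in $\Z/p^e\Z$. Conditioning on $(\beta_i)_{i\ne j}$, the map $\beta_j\mapsto t\cdot\beta$ is an affine bijection of $\Z/p^e\Z$, so $t\cdot\beta$ is uniform modulo $p^e$. Since the $\beta_i$ are independent and uniform, the residues $t\cdot\beta\bmod p^e$ for the various primes $p\mid N$ are mutually independent, each uniform, and the Chinese Remainder Theorem gives that $\beta_1' = t\cdot\beta$ is uniform on $\Z/N\Z$.

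Finally, combining with the correctness remark preceding the lemma: if $\beta_1'$ is a unit in $\Z/N\Z$ then the algorithm does not return to Step~\ref{step:construct} and outputs an $x$ with $g^x = h$, and otherwise it restarts; so a single execution succeeds precisely when $\beta_1'\in(\Z/N\Z)^\times$, an event of probability $\varphi(N)/N$ by uniformity. The delicate point is (i): although $R$ genuinely depends on $\beta$ through the inputs $g^{\alpha_i}h^{\beta_i}$, this dependence is entirely mediated by $\alpha$, which leaves $\beta\mid R$ uniform; the remaining steps are routine.
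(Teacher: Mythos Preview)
Your proof is correct and follows the same core idea as the paper: establish that $R$ (and hence the transformation matrix $T$) is independent of $\beta$ by observing that the distribution of $g^{\alpha_i}h^{\beta_i}$ does not depend on $\beta_i$, then conclude uniformity of $\beta_1'$. The paper shortens your step~(ii) by noting that since $T\in\mathrm{GL}_{m+1}(\Z/N\Z)$ is a bijection of $(\Z/N\Z)^{m+1}$ and independent of $\beta$, the whole vector $\beta'=T\beta$ is uniform, so in particular $\beta_1'$ is; your CRT argument reaches the same conclusion but is more work than needed.
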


\begin{proof}
  We follow the argument from~\cite[Sec.~5]{EngeGaudry} and~\cite[Sec.~2.3]{Diem}.  As $h \in \langle g \rangle$, for any fixed value $\beta_i = b \in \Z / N \Z$ the element $g^{\alpha_i} h^b$ is uniformly distributed over the group $\langle g \rangle$, therefore the element $g^{\alpha_i} h^{\beta_i}$ is independent of~$\beta_i$.
  As the executions of the descent algorithm are assumed to be independent, we have that the row $(r_{i,1}, \ldots, r_{i,m})$ is also independent of~$\beta_i$.
  It follows that the matrix~$R$ is independent of the vector~$\beta$.  Then the (invertible) transformation matrix $U \in (\Z / N \Z)^{(m+1) \times (m+1)}$ is also independent of~$\beta$, so that $\beta' = U \beta$ is uniformly distributed over $(\Z / N \Z)^{m+1}$, since~$\beta$ is.  From this the lemma follows.
\end{proof}

Regarding the running time, for Step~3 we note that a lower row echelon form of~$R$ can be obtained using invertible row transformations as for the Smith normal form, which along with the corresponding transformation matrices can be computed in polynomial time~\cite{SmithNF}, so that Step~3 takes time polynomial in~$m$ and $\log N$.
Furthermore, from~\cite{EulerPhi} we obtain $N / \varphi(N) \in O(\log \log N)$.
Altogether this implies that the DLP algorithm has quasi-polynomial expected running time (in~$\log N$), provided the descent is quasi-polynomial.  We defer a detailed complexity analysis of the descent to Section~\ref{sec:descent}.

Observe that the algorithm does not require $g$ to be a generator of~$\F_{q^{kl}}^{\times}$, which is in practice hard to test without factorising $N$.
In fact, the algorithm gives rise to a Monte Carlo method for deciding group membership $h \in \langle g \rangle$.  Indeed, if a discrete logarithm $\log_g h$ has been computed, then obviously $h \in \langle g \rangle$; thus if $h \not\in \langle g \rangle$, we always must have $\gcd(\beta_1', N) > 1$ in Step~4.

Practitioners may have noticed inefficiencies in the algorithm.  For example, in the usual index calculus method one precomputes the logarithms of all factor base elements and then applies a single descent to the target element to obtain its logarithm.
Moreover, one usually first computes the logarithm in $\F_{q^{kl}}^{\times} / \F_{q^k}^{\times}$, i.e., one ignores multiplicative constants and therefore includes only monic polynomials in the factor base, obtaining the remaining information by solving an additional DLP in $\F_{q^k}^{\times}$.
However, the setup as presented simplifies and facilitates our rigorous analysis.


\section{The descent}\label{sec:descent}

In this section we detail the building block behind our descent method and explain why its successful application implies Theorem~\ref{thm-main}.
Let~$q$ be a prime power, $k$ and~$l$ positive integers and let $R = \F_{q^k}[X,Y]$.
The setup for the target field~$\F_{q^{kl}}$ has irreducible polynomials $f_1 = Y - X^q \in R$ and $f_2 = h_1 Y - h_0 \in R$ with $h_0, h_1 \in \F_{q^k}[X]$ coprime of degree at most two and $h_1 X^q - h_0$ having an irreducible factor~$I$ of degree $l$, i.e., $R_{12} = \F_{q^k}[X,Y]/(f_1,f_2)$ is a finite ring surjecting onto~$\F_{q^{kl}} = \F_{q^k}[X] / (I)$.%
\footnote{One can equally well work with $f_2 = h_1 X - h_0$ with $h_i \in \F_{q^k}[Y]$ of degree at most two, where $h_1(X^q) X - h_0(X^q)$ has a degree~$l$ irreducible factor, as proposed in~\cite{GKZ14a}, with all subsequent arguments holding {\em mutatis mutandis}.}
This implies $R_1 = R / (f_1) \cong \F_{q^k}[X]$ and $R_2 = R / (f_2) \cong \F_{q^k}[X][\frac1{h_1}]$,
and from now on we identify elements in~$R_1$ and~$R_2$ with expressions in~$X$ via these isomorphisms.
The setup is summarised in Fig.~\ref{FFSpic}.
\begin{figure}[h]
  \begin{center}
    \begin{tikzpicture}
      \matrix (m) [matrix of math nodes, row sep = 3em, column sep = -1em] {
        & R = \F_{q^k}[X,Y] & \\
        R_1 = R / (f_1) & & R_2 = R / (f_2) \\
        & R_{12} = R / (f_1, f_2) & \\
        & \F_{q^{kl}} & \\
      };
      \path[-stealth] 
        (m-1-2) edge (m-2-1) edge (m-2-3)
        (m-2-1) edge (m-3-2)
        (m-2-3) edge (m-3-2)
        (m-3-2) edge (m-4-2);
    \end{tikzpicture}
  \end{center}
  \caption{Setup for the target field $\F_{q^{kl}}$}\label{FFSpic}
\end{figure}
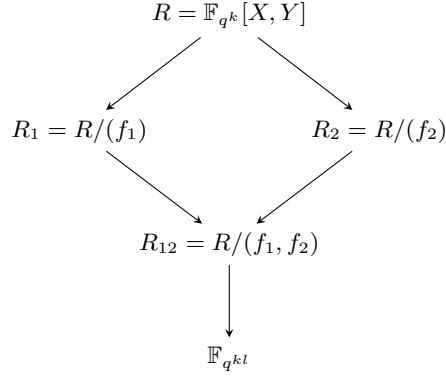

By the phrase ``rewriting a polynomial $Q$ (in $R_1$ or $R_2$) in terms of polynomials~$P_i$ (in~$R_1$ or~$R_2$)'' we henceforth mean that in the target field the image of~$Q$ equals a product of (positive or negative) powers of images of~$P_i$.
If the~$P_i$ are of lower degree then one has \emph{eliminated} the polynomial~$Q$.
Typically such rewritings are obtained by considering $\mathcal{P} \bmod f_1 \in R_1$ and $\mathcal{P} \bmod f_2 \in R_2$, where $\mathcal{P} \in R$.
Since~$h_1$ usually appears in $\mathcal{P} \bmod f_2$, it is adjoined to the factor base~$\mathcal{F}$, and for the sake of simplicity it is sometimes suppressed in the following description.
Accordingly, a \emph{descent} is an algorithm that rewrites any given nonzero target field element, represented by a polynomial~$Q$, in terms of polynomials~$F_j$ of the factor base, i.e., of degree $\le 1$.

\subsection{Degree two elimination}

In this subsection we review the on-the-fly degree two elimination method from~\cite{GGMZ13a}, adjusted for the present framework. In~\cite{bluher} the major portion of the set of polynomials obtained as linear fractional transformations of $X^q-X$ is parameterised as follows.
Let $\mathcal{B}_k$ be the set of $B \in \smash{\F_{q^k}^{\times}}$ such that the polynomial $X^{q+1}-BX+B$ splits completely over $\smash{\F_{q^k}}$, the cardinality of which is approximately $q^{k-3}$~\cite[Lemma 4.4]{bluher}.
Scaling and translating these polynomials means that all the polynomials $X^{q+1}+aX^q+bX+c$ with $c \ne ab$, $b \ne a^q$ and $B = \smash{\frac{(b -a^q)^{q+1}}{(c - ab)^q}}$ split completely over $\F_{q^k}$ whenever $B \in \mathcal{B}_k$.

Let $Q$ (viewed as a polynomial in $R_2$) be an irreducible quadratic polynomial to be eliminated.  We let $L_Q \subset \F_{q^k}[X]^2$ be the lattice defined by
\begin{equation}\label{eq:lq}
  L_Q = \{ (w_0, w_1) \in \F_{q^k}[X]^2 \mid w_0 h_0 + w_1 h_1 \equiv 0 \!\!\pmod{Q} \}.
\end{equation}
In the case that $Q$ divides $w_0 h_0 + w_1 h_1 \ne 0$ for some $w_0, w_1 \in \F_{q^k}$, then $Q = w (w_0 h_0 + w_1 h_1)$ for some $w \in \smash{\F_{q^k}^{\times}}$, since the degree on the right hand side is at most two.  
Therefore, $Q$ can be rewritten in terms of $w_0 X^q + w_1 = \smash{(w_0^{1/q} X + w_1^{1/q})^q} \in R_1$ (and~$h_1$), by considering the element $\mathcal{P} = w_0 Y + w_1 \in R$.
We will say in this case that the lattice is degenerate.

In the other (non-degenerate) case, $L_Q$ has a basis of the form $(1, {u_0 X + u_1})$, $(X, {v_0 X + v_1})$ with $u_i,v_i \in \F_{q^k}$.
Since the polynomial $\mathcal{P} = XY + aY + bX + c$ maps to $\frac 1 {h_1} ((X+a)h_0 + (bX+c)h_1)$ in~$R_2$, $Q$~divides~$\mathcal{P} \bmod f_2$ if and only if $(X+a, bX+c) \in L_Q$.
Note that the numerator of $\mathcal{P} \bmod f_2$ is of degree at most three, thus it can at worst contain a linear factor besides~$Q$.
If the triple $(a,b,c)$ also satisfies $c \ne ab$, $b \ne a^q$ and $\smash{\frac{(b - a^q)^{q+1}}{(c - ab)^q} \in \mathcal{B}_k}$, then $\mathcal{P} \bmod f_1$ splits into linear factors and thus~$Q$ has been rewritten in terms of linear polynomials.

Algorithmically, a triple $(a,b,c)$ satisfying all conditions can be found in several ways.
Choosing a $B \in \mathcal{B}_k$, considering $(X+a, bX+c) = a(1, u_0 X \!+ u_1) + (X, v_0 X \!+ v_1)$ and rewriting $b = u_0 a + v_0$ and $c = u_1 a + v_1$ gives the condition
\begin{equation}\label{eq:deg2elim}
  B = \frac{(-a^q + u_0 a +v_0)^{q+1}}{(-u_0 a^2 + (-v_0 + u_1)a + v_1)^q}.
\end{equation}
By expressing $a$ in an $\F_{q^k}/\F_q$ basis, (\ref{eq:deg2elim}) results in a quadratic system in $k$ variables~\cite{GGMZ13b}. Using a Gr\"obner basis algorithm the running time is exponential in~$k$.
Alternatively, and this is one of the key observations for the present work, equation~(\ref{eq:deg2elim}) can be considered as a polynomial of degree $q^2+q$ in $a$ whose roots can be found in (deterministic) polynomial time in~$q$ and in~$k$ by using an algorithm of Berlekamp~\cite{Berlekamp70}.
One can also check for random $(a,b,c)$ such that the lattice condition holds, whether $X^{q+1} + aX^q + bX + c$ splits into linear polynomials, which happens with probability~$q^{-3}$. Each such instance is also polynomial time in~$q$ and in~$k$.

These degree~$2$ elimination methods will fail when~$Q$ divides $h_1 X^q - h_0$, because this would imply that the polynomial $\mathcal{P} \bmod f_1 = X^{q+1} + aX^q + bX + c$ is divisible by~$Q$ whenever $\mathcal{P} \bmod f_2$ is, a problem first discussed in~\cite{traps}.
Such polynomials~$Q$ or their roots will be called traps of level~$0$.  Similarly, these degree $2$ elimination methods might also fail when $Q$ divides $h_1 X^{q^{k+1}} - h_0$, in which case such polynomials~$Q$ or their roots will be called traps of level~$k$. 

Note that for Kummer extensions, i.e., when $h_1 = 1$ and $h_0 = a X$ for some $a \in \F_{q^k}$, there are no traps and hence much of the following treatment is not required for proving only Theorem~\ref{thm-sequence}.  
However, it is essential to consider traps for proving the far more general Theorem~\ref{thm-main}.

\subsection{Elimination requirements}

\begin{sloppypar} The degree two elimination method can be transformed into an elimination method for irreducible even degree polynomials.  We now present a theorem which states that under some assumptions this degree two elimination is guaranteed to succeed, and subsequently demonstrate that it implies Theorem~\ref{thm-main}. \end{sloppypar}

An element $\tau \in \overline \F_{q^k}$ for which $[\F_{q^k}(\tau):\F_{q^k}]
= 2d$ is even and $h_1(\tau) \ne 0$, is called a {\em trap root} if it is a 
root of $h_1 X^q - h_0$ or $\smash{h_1 X^{q^{kd+1}} - h_0}$, or if 
$\frac{h_0}{h_1}(\tau) \in \F_{q^{kd}}$.
Note that the sets of trap roots is invariant under the absolute
Galois group of~$\F_{q^k}$.
A polynomial in~$R_1$ or~$R_2$ is said to be {\em good} if it has no
trap roots; the same definitions are used when the base field of~$R_1$
and~$R_2$ is extended.
This definition encompasses traps of level~$0$, of level~$kd$,
and the case where for $Q \ne h_1$ the lattice~$L_Q$ is degenerate.

\begin{theorem}\label{thm-onthefly}
  Let $q > 61$ be a prime power that is not a power of $4$, let $k \ge 18$ be an integer and let $h_0, h_1 \in \F_{q^k}[X]$ be coprime polynomials of degree at most two with $h_1 X^q - h_0$ having an irreducible degree~$l$ factor. 
  Moreover, let $d \ge 1$ be an integer, let $Q \in \F_{q^{kd}}[X]$, $Q \ne h_1$ be an irreducible quadratic good polynomial, and let $(1, {u_0 X + u_1}), (X, {v_0 X + v_1})$ be a basis of the lattice $L_Q$ in (\ref{eq:lq}), now over~$\F_{q^{kd}}$. 
  Then the number of solutions $(a,B) \in \F_{q^{kd}} \times \mathcal{B}_{kd}$ of (\ref{eq:deg2elim}) resulting in good descendents is at least~$q^{kd-5}$.
\end{theorem}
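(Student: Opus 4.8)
The strategy is a counting argument over the variable $a$. Equation~(\ref{eq:deg2elim}) expresses $B$ as a rational function $B = N(a)/D(a)^q$ where $N(a) = (-a^q + u_0 a + v_0)^{q+1}$ and $D(a) = -u_0 a^2 + (-v_0+u_1)a + v_1$. The plan is to count the pairs $(a,B)$ with $a \in \F_{q^{kd}}$ by letting $a$ range over the whole field and discarding the ``bad'' values: (i) values of $a$ for which $D(a) = 0$ (so $B$ is undefined), which are at most two; (ii) values for which the resulting $B$ does not lie in $\mathcal{B}_{kd}$, i.e., for which $X^{q+1} + aX^q + bX + c$ does not split completely over $\F_{q^{kd}}$; (iii) values for which the triple $(a,b,c) = (a,\, u_0 a + v_0,\, u_1 a + v_1)$ violates one of the side conditions $c \ne ab$ or $b \ne a^q$ (the degeneracy conditions making $\mathcal{P}\bmod f_1$ not of the Bluher form); and (iv) values for which, although $Q$ is eliminated, one of the cofactors — the extra linear factor of the numerator of $\mathcal{P}\bmod f_2$, or one of the $q+1$ linear factors of $\mathcal{P}\bmod f_1$ — is \emph{not} good, i.e., has a trap root. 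The claim is that after removing all of these, at least $q^{kd-5}$ values of $a$ remain, each giving a valid $(a,B)$ with good descendents.

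First I would handle the main term. The map $a \mapsto B$ from~(\ref{eq:deg2elim}) is a rational map of bounded degree ($q^2 + q$ in the numerator), so each $B \in \mathcal{B}_{kd}$ has at most $q^2 + q$ preimages $a$; combined with $|\mathcal{B}_{kd}| \ge (\text{const})\, q^{kd-3}$ from~\cite[Lemma 4.4]{bluher} this already shows the number of $a$ landing in $\mathcal{B}_{kd}$ is on the order of $q^{kd-3}/q^2 = q^{kd-5}$ up to constants — but to get the clean bound $q^{kd-5}$ one must be careful with the constants, which is presumably where the hypotheses $q > 61$, $q$ not a power of $4$, and $k \ge 18$ enter (they make the Bluher estimate and the error terms below favourable). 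So I would instead argue the other way: start from $|\F_{q^{kd}}| = q^{kd}$, subtract the $O(1)$ values from (i) and (iii), and then subtract the count of the remaining ``bad'' $a$'s, namely those whose $B \notin \mathcal{B}_{kd}$ and those producing a non-good cofactor, and show these together number at most $q^{kd} - q^{kd-5}$.

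The bad-$B$ count is the crux: I need an \emph{upper} bound on the number of $a \in \F_{q^{kd}}$ for which $X^{q+1} + aX^q + bX + c$ fails to split completely. Equivalently, I would bound the number of $a$ for which the associated $B = N(a)/D(a)^q$ lies outside $\mathcal{B}_{kd}$. Writing $m = q^{kd}$, the Bluher classification partitions $\F_{q^{kd}}^\times$ according to how $X^{q+1} - BX + B$ factors (into $0$, $1$, $2$, or $q+1$ roots), with the split-completely class $\mathcal{B}_{kd}$ being the dominant one; the complement has size roughly $q^{kd} - q^{kd-3}$. Pulling this back through the degree-$(q^2+q)$ map $a \mapsto B$ naively gives an upper bound of order $q^{kd+2}$ for the bad set, which is useless. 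The honest approach must instead use that the fibre sizes of $a \mapsto B$ are \emph{generically} controlled — the preimage of a generic $B$ has a predictable size — so that pulling back a set of density $\approx q^{-3}$ yields a set of density $\approx q^{-3}$ in the $a$-line as well, up to a genuinely small error; this is the step I expect to consume most of the proof and to require the numerical constraints on $q$ and $k$. For the non-good cofactors in (iv): there are $O(1)$ candidate trap roots of levels $0$ and $kd$ (roots of $h_1X^q - h_0$ and $h_1 X^{q^{kd+1}} - h_0$, bounded in number by the degrees, using $\deg h_i \le 2$), plus the condition $\frac{h_0}{h_1}(\tau)\in\F_{q^{kd}}$; each such trap root $\tau$, being a fixed algebraic element, can be a root of one of the $q+1$ linear factors of $\mathcal P\bmod f_1$ or of the linear cofactor of $\mathcal P\bmod f_2$ for only $O(q)$ values of $a$ (solve a low-degree equation in $a$ for each), so altogether (iv) rules out only $O(q)$ values of $a$ — negligible against $q^{kd-5}$ once $kd \ge 18$.

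Finally I would assemble the pieces: $q^{kd}$ minus $O(1)$ minus $O(q)$ minus (number of $a$ with $B\notin\mathcal B_{kd}$) $\ge q^{kd-5}$, where the last subtraction is where the work is and where $q>61$, $q\ne 4^j$, $k\ge 18$ are spent to absorb the error terms and the constant in the Bluher lemma. \textbf{The main obstacle} is the bad-$B$ estimate: transferring the density statement about $\mathcal{B}_{kd}$ inside $\F_{q^{kd}}$ into a density statement about its preimage under the rational map~(\ref{eq:deg2elim}) on the $a$-line, uniformly and with an explicit enough constant to reach $q^{kd-5}$ rather than merely $\Omega(q^{kd-5})$.
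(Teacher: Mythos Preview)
Your plan correctly identifies the shape of the argument and even correctly pinpoints the main obstacle, but it contains a genuine gap precisely at that obstacle: you offer no mechanism for the ``density transfer'' from $\mathcal{B}_{kd}\subset\F_{q^{kd}}$ to its preimage on the $a$-line. Saying that ``fibre sizes of $a\mapsto B$ are generically controlled'' is a hope, not a method; as you yourself note, the naive pullback bound is useless, and nothing in your outline indicates how to do better. This is not merely a matter of sharpening constants with $q>61$ and $k\ge 18$. The paper's proof shows that this step requires a structural algebraic input you have not anticipated: one introduces the Bluher parametrisation $B=\frac{(u-u^{q^2})^{q+1}}{(u-u^q)^{q^2+1}}$ of $\mathcal{B}_{kd}$ by $u\in K\setminus\F_{q^2}$, clears denominators to obtain a plane curve $P(a,u)=0$ of degree $\le q^3+q$, and then proves that $P$ has an absolutely irreducible factor already defined over $K$. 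That last assertion is the heart of the matter and is established by exploiting the $\PGL_2(\F_q)$-action on the $u$-variable together with the Dickson classification of subgroups of $\PGL_2(\F_q)$; the conditions $q>61$ and $q$ not a power of $4$ are used to eliminate sporadic subgroup cases (e.g.\ $A_5$, and $\PGL_2(\F_{q'})$ in characteristic $2$), while the hypothesis that $Q$ is good (your conditions $(*)$ and $(**)$ implicitly) rules out the cyclic and Borel stabiliser cases. Once irreducibility is in hand, the Weil bound for singular plane curves gives $\gtrsim q^{kd}$ points $(a,u)$, hence $\ge q^{kd-4}$ pairs $(a,B)$ after dividing by the $(q^3-q)$-to-$1$ parametrisation. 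None of this is a counting refinement; it is an irreducibility theorem, and your proposal misses it entirely.

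There is also an error in your trap-root accounting. The trap roots relevant to the descendants are not $O(1)$ in number: the condition $\frac{h_0}{h_1}(\tau)\in\F_{q^{kd'}}$ for $d'\mid d/2$ already allows up to about $q^{kd/2}$ such $\tau$, and the paper bounds the total by $q^{kd/2+3}$. Each trap root can occur for at most two values of $a$ (not $O(q)$; this is Proposition~\ref{prop:trap}), so the number of bad $a$'s from (iv) is at most $q^{kd/2+4}$, and it is precisely the inequality $q^{kd/2+4}\le q^{kd-5}$ that forces $kd\ge 18$. Your claim that (iv) costs only $O(q)$ values of $a$ is therefore incorrect, and with it your explanation of where the hypothesis $k\ge 18$ is spent.
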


This theorem is of central importance for our rigorous analysis and is proven in Section~\ref{sec:correctness}.

\subsection{Degree $2d$ elimination and descent complexity}

Now we demonstrate how the degree two elimination gives rise to a method for eliminating irreducible even degree polynomials, which is the crucial building block for our descent algorithm.
As per Theorem~\ref{thm-onthefly}, let $q > 61$ be a prime power that is not a power of $4$, let $k \ge 18$, and let $h_0, h_1, I$ as before.

\begin{proposition}\label{prop-2d}
  Let $d \ge 1$ and $Q \in R_2$, $Q \ne h_1$, be an irreducible good polynomial of degree~$2d$.  Then~$Q$ can be expressed in terms of at most $q+2$ irreducible good polynomials of degrees dividing~$d$, in an expected running time polynomial in~$q$ and in~$d$.
\end{proposition}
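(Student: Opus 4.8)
The plan is to reduce the degree-$2d$ elimination to a single degree-$2$ elimination performed over the extension~$\F_{q^{kd}}$, invoke Theorem~\ref{thm-onthefly} there, and then descend back to~$\F_{q^k}$ by taking a Galois norm. First I would pass to~$\F_{q^{kd}}$: since~$Q$ is irreducible of degree~$2d$ over~$\F_{q^k}$ and $\gcd(2d,d)=d$, it splits over~$\F_{q^{kd}}$ into~$d$ irreducible quadratic factors, which form a single orbit under $\Gal(\F_{q^{kd}}/\F_{q^k})$ and (taking~$Q$ monic, which is harmless since nonzero scalars are units) have product~$Q$. Fix one such factor~$\tilde Q$, equivalently the minimal polynomial over~$\F_{q^{kd}}$ of a root~$\tau$ of~$Q$. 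Then $\tilde Q\neq h_1$, for otherwise $Q=h_1^d$, contradicting the irreducibility of~$Q$ when $d\ge2$ and the hypothesis $Q\neq h_1$ when $d=1$. Since~$Q$ is good and~$\tilde Q$ shares the root~$\tau$ with~$Q$, also~$\tilde Q$ is good, the set of trap roots being invariant under the absolute Galois group of~$\F_{q^k}$; in particular $\tfrac{h_0}{h_1}(\tau)\notin\F_{q^{kd}}$, so the lattice~$L_{\tilde Q}$ of~(\ref{eq:lq}) over~$\F_{q^{kd}}$ is non-degenerate and thus has a basis $(1,u_0X+u_1),(X,v_0X+v_1)$ with $u_i,v_i\in\F_{q^{kd}}$, obtained by reducing $-h_0h_1^{-1}$ and $-Xh_0h_1^{-1}$ modulo~$\tilde Q$ (legitimate as $\gcd(h_1,\tilde Q)=1$).

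Next I would apply Theorem~\ref{thm-onthefly} to~$\tilde Q$ over~$\F_{q^{kd}}$, obtaining at least~$q^{kd-5}$ solutions $(a,B)\in\F_{q^{kd}}\times\mathcal B_{kd}$ of~(\ref{eq:deg2elim}) with good descendents. As~$B$ is determined by~$a$ through~(\ref{eq:deg2elim}), these correspond to at least~$q^{kd-5}$ values $a\in\F_{q^{kd}}$, that is, to a fraction at least~$q^{-5}$ of~$\F_{q^{kd}}$. To find one I would repeatedly: sample $a\in\F_{q^{kd}}$ uniformly at random, set $b=u_0a+v_0$ and $c=u_1a+v_1$, test whether $X^{q+1}+aX^q+bX+c$ splits completely over~$\F_{q^{kd}}$ (by checking, via repeated squaring modulo this polynomial, that its $\gcd$ with $X^{q^{kd}}-X$ has degree~$q+1$), and, if so, compute its $q+1$ roots $r_1,\dots,r_{q+1}\in\F_{q^{kd}}$ together with the cofactor~$\ell$ (of degree at most one) of~$\tilde Q$ in the numerator $(X+a)h_0+(bX+c)h_1$ of~$\mathcal P\bmod f_2$, and test that each of $X-r_1,\dots,X-r_{q+1}$ and~$\ell$ is good (for each root, compute its degree over~$\F_{q^k}$ and, when that degree is even, check the finitely many trap conditions). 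Each iteration runs in time polynomial in~$q$,~$k$ and~$d$ and succeeds with probability at least~$q^{-5}$, so the expected number of iterations is~$O(q^5)$ and a suitable~$(a,B)$ is found within the claimed expected time.

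Finally I would assemble the relation and descend. For $\mathcal P=XY+aY+bX+c\in R$ one has $\mathcal P\bmod f_1=X^{q+1}+aX^q+bX+c=\prod_{i=1}^{q+1}(X-r_i)$ and $\mathcal P\bmod f_2=\tfrac1{h_1}\bigl((X+a)h_0+(bX+c)h_1\bigr)=\tfrac{\tilde Q\,\ell}{h_1}$; equating the images of these two reductions in the target field, in which~$h_1$ and (by goodness) $\ell$ are invertible, rewrites~$\tilde Q$ as $h_1\,\ell^{-1}\prod_{i=1}^{q+1}(X-r_i)$. Applying each $\sigma\in\Gal(\F_{q^{kd}}/\F_{q^k})$ to this identity coefficientwise, multiplying the~$d$ resulting conjugate identities, and using $\prod_\sigma\sigma(\tilde Q)=Q$, then rewrites~$Q$ as
\[
  h_1^{\,d}\;N(\ell)^{-1}\prod_{i=1}^{q+1}N(X-r_i),
\]
where $N(\,\cdot\,)=\prod_{\sigma}\sigma(\,\cdot\,)$ is the norm down to~$\F_{q^k}[X]$. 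Now each~$N(X-r_i)$ is a power of the minimal polynomial over~$\F_{q^k}$ of $r_i\in\F_{q^{kd}}$, hence an irreducible polynomial of degree dividing~$d$, and~$N(\ell)$ is a unit (if~$\ell$ is constant) or a power of such a polynomial; all of these are good, for the set of trap roots is invariant under $\Gal(\overline\F_{q^k}/\F_{q^k})$ and the descendents $X-r_i$ and~$\ell$ are good, so none of their conjugates is a trap root. As Galois-conjugate~$r_i$ produce the same minimal polynomial, this expresses~$Q$ in terms of at most~$q+2$ irreducible good polynomials of degrees dividing~$d$ --- the minimal polynomials of the~$r_i$ and, when~$\ell$ is non-constant, of a root of~$\ell$ --- together with~$h_1$, which (being a factor-base element) is suppressed as usual; this proves the proposition.

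I expect the main obstacle to be the bookkeeping in the first and last paragraphs: verifying that goodness of~$Q$ over~$\F_{q^k}$ really does coincide with goodness of~$\tilde Q$ over~$\F_{q^{kd}}$ (so that Theorem~\ref{thm-onthefly} is applicable and~$L_{\tilde Q}$ is non-degenerate) and that both goodness and the ``degree dividing~$d$'' property persist under the norm. The substantive content --- the existence of at least~$q^{kd-5}$ good solutions --- is precisely Theorem~\ref{thm-onthefly}, proven in Section~\ref{sec:correctness}.
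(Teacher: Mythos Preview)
Your proposal is correct and follows essentially the same approach as the paper: factor $Q$ over $\F_{q^{kd}}$ into a Galois orbit of good irreducible quadratics, apply Theorem~\ref{thm-onthefly} to one of them to obtain a $\mathcal{P}$ with split $f_1$-side and good descendents, and take the $\Gal(\F_{q^{kd}}/\F_{q^k})$-norm to recover a rewriting of $Q$ into at most $q+2$ irreducible good polynomials whose degrees divide~$d$. You supply more algorithmic detail than the paper's proof (the explicit random sampling with split test and goodness check, and the explicit Galois-invariance argument for goodness under norms), but the structure and the key input---Theorem~\ref{thm-onthefly}---are identical.
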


\begin{proof}
  Over the extension $\F_{q^{kd}}$ the polynomial $Q$ splits into $d$ irreducible good quadratic polynomials, which are all conjugates under $\Gal(\F_{q^{kd}} / \F_{q^k})$; let~$Q'$ be one of them.  Since $Q' \ne h_1$ is good it does not divide $w_0 h_0 + w_1 h_1 \ne 0$ for some $w_0, w_1 \in \F_{q^{kd}}$.
  By Theorem~\ref{thm-onthefly}, with an expected polynomial number of trials, the degree two elimination method for $Q' \in \F_{q^{kd}}[X]$ produces a polynomial $P' \in \F_{q^{kd}}[X,Y]$ such that $P' \bmod f_1$ splits into a product of at most $q+1$ good polynomials of degree one over $\F_{q^{kd}}$ and such that $(P' \bmod f_2)h_1$ is a product of $Q'$ and a good polynomial of degree at most one.
  Let $P$ be the product of all conjugates of $P'$ under $\Gal(\F_{q^{kd}} / \F_{q^k})$.  As the product of all conjugates of a linear polynomial under $\Gal(\F_{q^{kd}} / \F_{q^k})$ is the $d_1$-th power of an irreducible degree~$d_2$ polynomial for~$d_1$ and~$d_2$ satisfying $d_1 d_2 = d$, the rewriting assertion of the proposition follows.

  The three steps of this method -- computing $Q'$, the degree two elimination (when the second or third approach listed above for solving (\ref{eq:deg2elim}) is used), and the computation of the polynomial norms -- all have running time polynomial in $q$ and in $d$, which proves the running time assertion.
\end{proof}

By recursively applying Proposition~\ref{prop-2d} we can express a good irreducible polynomial of degree $2^e$, $e \ge 1$, in terms of at most $(q+2)^{e}$ linear polynomials.
The final step of this recursion, namely eliminating up to $(q+2)^{e-1}$ quadratic polynomials, dominates the running time, which is thus upper bounded by $(q+2)^e$ times a polynomial in~$q$.

\begin{lemma}\label{lem-wan}
  Any nonzero element in $\F_{q^{kl}}$ can be lifted to an irreducible good polynomial of degree~$2^e$ in $\F_{q^k}[X]$, provided that $2^e > 4l$.
\end{lemma}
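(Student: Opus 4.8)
The goal is to lift a nonzero element $z \in \F_{q^{kl}} \cong \F_{q^k}[X]/(I)$ to a polynomial $Q \in \F_{q^k}[X]$ of degree exactly $2^e$ that is irreducible over $\F_{q^k}$ and good (i.e.\ has no trap roots). First I would fix the unique representative $z_0 \in \F_{q^k}[X]$ of $z$ of degree $< l$; any lift of $z$ has the form $Q = z_0 + I \cdot g$ for some $g \in \F_{q^k}[X]$, and to hit degree $2^e$ exactly we need $\deg g = 2^e - l$ (using $\deg I = l$), so $Q$ ranges over an affine space of dimension $2^e - l + 1$ as $g$ varies, all of whose members map to $z$ in $\F_{q^{kl}}$. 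The plan is a counting argument: show that within this family the number of $Q$ that fail to be irreducible, plus the number that are not good, is strictly smaller than the size $q^{k(2^e-l+1)}$ of the family.

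\textbf{Key steps.} Step one: count reducible $Q$. The number of polynomials of degree $2^e$ over $\F_{q^k}$ that pass through the prescribed residue $z_0$ modulo $I$ and are reducible can be bounded above by a standard sieve — the dominant contribution comes from $Q$ having an irreducible factor of degree $j$ for small $j$, and since the congruence class modulo $I$ (of degree $l < 2^e$) imposes $l$ linear conditions that are "transverse" to the divisibility conditions by fixed low-degree polynomials, the count of reducible members is $O(q^{k(2^e - l)} \cdot q^{k \cdot 2^{e-1}})$-ish, i.e.\ a $q^{-\Theta(2^e)}$ fraction of the family; this is where one invokes a Chebotarev/prime-polynomial-theorem style estimate for arithmetic progressions of polynomials, e.g.\ a result in the spirit of Wan (hence the lemma name \texttt{lem-wan}). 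Step two: count non-good $Q$. A $Q$ is non-good iff it has a trap root $\tau$, and by definition trap roots $\tau$ with $[\F_{q^k}(\tau):\F_{q^k}] = 2d$ are either roots of $h_1 X^q - h_0$ or of $h_1 X^{q^{kd+1}} - h_0$, or satisfy $\frac{h_0}{h_1}(\tau) \in \F_{q^{kd}}$; in each case the set of such $\tau$ is finite or lies in a fixed finite union of proper subvarieties, so the number of degree-$2^e$ polynomials through $z_0 \bmod I$ that vanish at \emph{some} trap root is again a negligible fraction — for each candidate trap root, vanishing at it is one more linear/algebraic condition beyond the congruence. Step three: combine — since $2^e > 4l$ forces $2^e - l > 2^{e-1} > l$, the family is large enough that subtracting the two bad counts leaves a positive number, so a good irreducible lift exists.

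\textbf{Main obstacle.} The crux is the reducibility count under the fixed congruence $Q \equiv z_0 \pmod I$: one needs an effective lower bound on the number of \emph{irreducible} degree-$2^e$ polynomials in a prescribed residue class modulo $I$, which is exactly an analogue of Dirichlet/Chebotarev for $\F_{q^k}[X]$. The inequality $2^e > 4l$ is presumably calibrated precisely so that the error term in such a count (which scales like $q^{k(2^e-l)} \cdot (\text{something like } q^{k 2^{e-1}}/2^{e-1} + q^{kl} \cdot 2^{e/2})$) stays below the main term $\tfrac{1}{2^e}\,q^{k(2^e - l)}$ coming from $\varphi_I$-type considerations; I expect the bulk of the work to be making this inequality explicit and checking the good-polynomial exclusion is genuinely lower order. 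The irreducibility-in-a-progression estimate itself I would quote from the literature rather than reprove.
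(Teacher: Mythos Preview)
Your plan is the paper's proof: cite an effective Dirichlet-type theorem for irreducibles in arithmetic progressions over $\F_{q^k}[X]$ (the paper uses Wan, Thm.~5.1, which for $2^e>4l$ gives at least $q^{k(2^e-l)}2^{-e-1}$ irreducible lifts), then discard the non-good ones by a trap-root count. Two small corrections: your stated heuristic for reducibles is garbled (the reducible fraction is $1-O(2^{-e})$, not $q^{-\Theta(2^e)}$), though this is moot since you cite the bound; and the paper's trap-root step is simpler than yours --- once you restrict to \emph{irreducible} $Q$ of degree $2^e$, every root has degree exactly $2^e$ over $\F_{q^k}$, so only trap roots of that one degree matter and each lies in a unique monic irreducible, giving at most $q^{k2^{e-1}+2}$ non-good irreducibles directly, with no per-root linear-condition bookkeeping needed.
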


\begin{proof}
  By the effective Dirichlet-type theorem on irreducibles in arithmetic progressions \cite[Thm.~5.1]{Wan}, for $2^e > 4l$ the probability of irreducibility for a random lift is lower bounded by $2^{-e-1}$.  One may actually find an irreducible polynomial of degree $2^e$ which is good, since the number of possible trap roots ($<q^{k2^{e-1}+2}$) is much smaller than the number ($>q^{k(2^e-l)}2^{-e-1}$) of irreducibles produced by this Dirichlet-type theorem.
\end{proof}

Finally, putting everything together (and assuming Theorem~\ref{thm-onthefly}) proves the quasi-polynomial expected running time of a descent and therefore the running time of the algorithm, establishing Theorem~\ref{thm-main}.

Note that when $q = L_{q^{kl}} (\alpha)$, where $L_N(\alpha)$ for $\alpha \in [0, 1]$ is the usual subexponential function $\exp( O( (\log N)^{\alpha} (\log \log N)^{1 - \alpha} ) )$,
as in~\cite{BGJT} the complexity stated in Theorem~\ref{thm-main} is $L_{q^{kl}} (\alpha + o(1))$, which is therefore better than the classical function field sieve for 
$\alpha < \frac 1 3$.

\enlargethispage{5mm}

Also note that during an elimination step, one need not use the basic building block as stated, which takes the norms of the linear polynomials produced back down to $\F_{q^{k}}$. Instead, one need only take their norms to a subfield of index $2$, thus becoming quadratic polynomials, and then recurse, as depicted in Fig.~\ref{descentpic}.

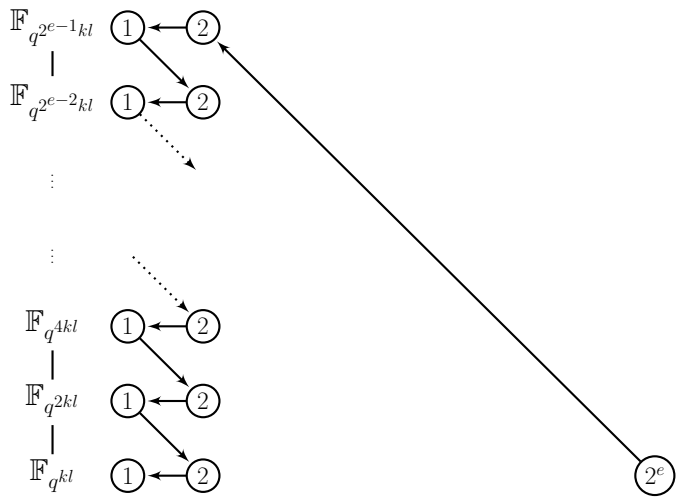
\begin{figure}[h]
\centering
\tikzstyle{line} = [draw, -latex']
\begin{center}
\begin{tikzpicture}[->,>=stealth',shorten >=1pt,auto,node distance=1.8cm,
  thick,main node/.style={circle,draw,font=\sffamily\LARGE\bfseries}, scale=0.55, transform shape]

\node[main node] (1) {$1$};
\node[main node] (2) [right of=1] {$2$};
\node (4) [right of=2] {};
\node (8) [right of=4] {};
\node (16) [right of=8] {};
\node (dots) [right of=16] {};
\node (dots2) [right of=dots] {};
\node[main node] (2_e) [right of=dots2] {\LARGE{$2^e$}};
\node (fqk) [left of=1] {\huge{$\F_{q^{kl}}$}};
\path [line] (2) edge (1);

\node (fq2k)[above of=fqk]{\huge{$\F_{q^{2kl}}$}};
\node[main node] (1a) [right of=fq2k] {$1$};
\node[main node] (2a) [right of=1a] {$2$};
\path [line] (2a) edge (1a);
\path [line] (1a) edge (2);
\path [line] (fqk) edge [-] (fq2k);

\node (fq4k)[above of=fq2k]{\huge{$\F_{q^{4kl}}$}};
\node[main node] (1b) [right of=fq4k] {$1$};
\node[main node] (2b) [right of=1b] {$2$};
\path [line] (fq2k) edge [-] (fq4k);
\path [line] (2b) edge (1b);
\path [line] (1b) edge (2a);

\node (fq8k)[above of=fq4k]{$\vdots$};
\node (1c) [right of=fq8k] {};
\node (2c) [right of=1c] {};
\path [line,dotted] (1c) edge node {} (2b);

\node (vdots) [above of=fq8k] {$\vdots$};
\node (1d) [right of=vdots] {};
\node (2d) [right of=1d] {};

\node (fq2e2k) [above of=vdots] {\huge{$\F_{q^{2^{e-2}kl}}$}};
\node[main node] (1e) [right of=fq2e2k] {$1$};
\node[main node] (2e) [right of=1e] {$2$};
\path [line] (2e) edge (1e);
\path [line,dotted] (1e) edge node {} (2d);

\node (fq2e1k)[above of=fq2e2k]{\huge{$\F_{q^{2^{e-1}kl}}$}};
\node[main node] (1f) [right of=fq2e1k] {$1$};
\node[main node] (2f) [right of=1f] {$2$};
\path [line] (fq2e1k) edge [-] (fq2e2k);
\path [line] (2f) edge (1f);
\path [line] (1f) edge (2e);

\path [line] (2_e) edge (2f);

\end{tikzpicture}
\end{center}
\caption{Elimination of irreducible polynomials of degree a power of $2$ when considered as elements of $\F_{q^k}[X]$. The arrow directions $\nwarrow, \leftarrow$ and 
$\searrow$ indicate factorisation, degree 2 elimination and taking a norm with respect to the indicated subfield, respectively.
(We have suppressed the rare cases, where linear polynomials are already in a subfield of index 2.)}\label{descentpic}
\end{figure}


\section{Proof of Theorem~\ref{thm-onthefly}}\label{sec:correctness}

In this section we prove Theorem~\ref{thm-onthefly}, which by the arguments of the previous section demonstrates the correctness of the algorithm and the main theorems.

\subsection{Notation and statement of supporting results}\label{A1}

Let $K = \F_{q^{kd}}$ where ${kd \ge 18}$, let $L = \F_{q^{2kd}}$ be its quadratic extension, and 
let $\mathcal{B}$ be the set of $B \in K^{\times}$ such that the polynomial $X^{q+1} - BX + B$ splits completely over~$K$.  
Using an elementary extension of~\cite[Prop.~5]{HellesethKholosha} we have the following characterisation; we add a short proof for the reader's convenience.

\begin{lemma}\label{lem:bchar}
  The set~$\mathcal{B}$ equals the image of $K \setminus \F_{q^2}$
  under the map
  \[ u \mapsto \frac{(u - u^{q^2})^{q+1}} {(u - u^q)^{q^2+1}} . \]
\end{lemma}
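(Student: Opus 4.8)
The plan is to show both inclusions between $\mathcal{B}$ and the image of the stated map $\phi\colon u \mapsto (u - u^{q^2})^{q+1}/(u - u^q)^{q^2+1}$. The natural strategy is to connect the splitting behaviour of $X^{q+1} - BX + B$ to the $\F_{q^2}$-rational points of the underlying projective line, exploiting the classical fact (going back to Bluher and to Helleseth--Kholosha) that linear fractional transformations of $X^q - X$ over $\overline{\F_q}$ are governed by the action of $\PGL_2(\F_{q^2})$. Concretely, I would first record that $X^{q+1} - BX + B$ (after a suitable Möbius substitution $X \mapsto \frac{\alpha X + \beta}{\gamma X + \delta}$ clearing denominators) is projectively equivalent to $X^{q+1} + (\text{lower order})$ coming from the polynomial $N(X) = X \cdot \frac{X^{q^2} - X}{X^q - X}$ evaluated along a line; the roots of $X^{q+1}-BX+B$ correspond to the preimages under $u \mapsto \phi(u)$ of the value $B$. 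This is the ``elementary extension of \cite[Prop.~5]{HellesethKholosha}'' alluded to in the statement.

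The key steps, in order, are: \textbf{(i)} Show that for $u \in K \setminus \F_{q^2}$ the value $\phi(u)$ lies in $K^{\times}$ — the denominator $u - u^q$ is nonzero since $u \notin \F_q \supseteq$ is implied by $u \notin \F_{q^2}$, and one checks $\phi(u) \in K$ by verifying it is fixed by $\Gal(L/K)$, i.e. invariant under $u \mapsto u^{q^{kd}}$; here one uses $u^{q^{2kd}} = u$ and manipulates exponents modulo $q^{kd}$-Frobenius. \textbf{(ii)} For such $u$, exhibit the complete splitting of $X^{q+1} - \phi(u) X + \phi(u)$ over $K$: write down the $q+1$ roots explicitly as $\frac{u^q - \zeta u^{q^2}}{\text{something}}$ type expressions indexed by the $\zeta$ with $\zeta^{q+1}$-behaviour, or equivalently identify the roots with the orbit $\{\,\frac{u - v}{u^q - v^q} : v \in \F_{q^2}\,\}$ suitably normalised, and check there are exactly $q+1$ of them, all in $K$. \textbf{(iii)} Conversely, given $B \in \mathcal{B}$ with a root $x_0 \in K$ of $X^{q+1} - BX + B$, solve for a preimage $u$: rearrange the relation defining the roots to express $u$ as a ratio built from $x_0$ and from a second root, and verify $u \in K$ but $u \notin \F_{q^2}$ (the latter because otherwise the polynomial would degenerate, contradicting $B \in K^\times$ and the $q+1$ distinct roots). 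Then $\phi(u) = B$.

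I expect the main obstacle to be step \textbf{(ii)}/\textbf{(iii)}: pinning down the exact correspondence between roots of $X^{q+1} - BX + B$ and the fibres of $\phi$, including the bookkeeping that the fibre has the right size (so that ``splits completely'' — with the correct multiplicity structure, i.e. $q+1$ distinct roots — matches ``$B$ is in the image''). One must be careful that $\phi$ is exactly $(q+1)$-to-one onto its image on $K \setminus \F_{q^2}$ modulo the $\PGL_2(\F_{q^2})$-type stabiliser, and that the excluded locus $\F_{q^2}$ is precisely where the construction breaks down (zero denominator or collapsed polynomial). The arithmetic is a routine but delicate Frobenius-exponent computation; framing it via the function $N(X) = X(X^{q^2}-X)/(X^q-X)$ and its known ramification should keep it manageable, and the $kd \ge 18$ hypothesis plays no role here beyond ensuring $K$ is large enough for $\mathcal{B}$ to be nonempty, so I would not invoke it in this lemma.
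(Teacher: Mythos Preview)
Your strategy---write down the roots of $X^{q+1}-BX+B$ explicitly as functions of $u$ and match the fibres of $\phi$ to the splitting condition---is viable in principle, but the paper takes a different and considerably cleaner route via the $\PGL_2(K)$-action on polynomials. Concretely, for each $u \in K \setminus \F_{q^2}$ the paper exhibits an explicit matrix $g_u \in \PGL_2(K)$, written as a product
\[
\begin{pmatrix} \lambda & 0 \\ 0 & 1 \end{pmatrix}
\begin{pmatrix} 1 & \mu \\ 0 & 1 \end{pmatrix}
\begin{pmatrix} 1 & 0 \\ u & 1 \end{pmatrix}
\]
with $\lambda,\mu$ determined by $u$, that transforms $X^q-X$ into $X^{q+1}-BX+B$ with $B=\phi(u)$; complete splitting is then inherited for free from $X^q-X$, giving $\phi(K\setminus\F_{q^2}) \subseteq \mathcal{B}$ without ever naming the roots. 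For the converse, one observes that $X^{q+1}-BX+B$ with $B\ne 0$ has no repeated roots, so if it splits completely over~$K$ it must be $(X^q-X)^g$ for some $g\in\PGL_2(K)$; the degree $q+1$ forces the lower-left entry of $g$ to be nonzero, so $g$ admits the same three-factor decomposition for some $u\in K$, and the shape of the polynomial then pins down $\lambda,\mu$ (and hence $B$) in terms of $u$. Your direct-root computation would recover the same facts, but with more bookkeeping; the $\PGL_2$ framing also dovetails with the rest of Section~\ref{sec:correctness}, where this action is used heavily.

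Two small corrections to your sketch. In step~(i), since $u\in K$ already, $\phi(u)\in K$ is automatic---no $\Gal(L/K)$-invariance check is needed; you only need $u\notin\F_{q^2}$ (hence $u\notin\F_q$) to ensure numerator and denominator are nonzero. And your fibre count is off: $\phi$ is $(q^3-q)$-to-one on $K\setminus\F_{q^2}$, not $(q+1)$-to-one, because the stabiliser of $X^q-X$ in $\PGL_2(K)$ is $\PGL_2(\F_q)$; the paper uses this later but it is not needed for the lemma itself.
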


\begin{proof}
  We consider the action of $\PGL_2(K)$ on polynomials, cf.\
  Subsection~\ref{ssec:pgl}.  For $u \in K \setminus \F_{q^2}$ the 
  matrix 
  \[ \begin{pmatrix} {\lambda} & {0} \\ {0} & {1} \end{pmatrix}
  \begin{pmatrix} {1} & {\mu} \\ {0} & {1} \end{pmatrix}
  \begin{pmatrix} {1} & {0} \\ {u} & {1} \end{pmatrix}
  \text{ with }
  \lambda = \frac{(u-u^q)^q} {(u-u^q) (u-u^{q^2})}
  \text{ and }
  \mu = -\frac 1 {u-u^q} \] transforms the polynomial
  $X^q - X$ into $X^{q+1} - B X + B$ with
  $B = \smash{\frac{(u-u^{q^2})^{q+1}} {(u-u^q)^{q^2+1}}}$.
  Thus the set $\mathcal{B}$ contains the image of the map.
  
  Conversely, assume that $X^{q+1} - B X + B$ splits completely and $B
  \ne 0$.  Since the polynomial has no double roots, it is $X^q - X$
  transformed under some $g \in \PGL_2(K)$.  As the polynomial has
  degree $q+1$ the matrix $g$ can be decomposed as above, a priori
  with different $\lambda$ and $\mu$.  Since the shape of the
  polynomial determines~$\lambda$ and~$\mu$ in terms of $u$, $B$ must
  be as above.
\end{proof}

Now let~$Q$ be an irreducible quadratic polynomial in $K[X]$ such that
a basis of its associated lattice $L_Q$ in~(\ref{eq:lq}), now over~$K$,
is given by $(1, {u_0 X + u_1}), (X, {v_0 X + v_1})$.  Then $Q$ is a
scalar multiple of~${-u_0 X^2 + (-u_1 + v_0) X + v_1}$.
By Lemma~\ref{lem:bchar} and~(\ref{eq:deg2elim}), in order to eliminate $Q$ we need to find $(a,u) \in K \times (K \setminus \Ft)$ satisfying
\[ (u - u^{q^2})^{q+1}(-u_0 a^2 + (-v_0 + u_1)a + v_1)^q -  (u - u^q)^{q^2+1} (-a^q + u_0 a +v_0)^{q+1}   = 0. \]
The two terms have a common factor $(u-u^q)^{q+1}$ which motivates the following definitions.
Let $\alpha=-u_0$, $\beta=u_1-v_0$, $\gamma=v_1$ and $\delta=-v_0$ with $\alpha,\beta,\gamma,\delta \in K$, as well as
\begin{gather*}
  D = \frac{U^{q^2}-U}{U^q-U} = \prod_{\epsilon \in \Ft \setminus \Fo} 
  (U-\epsilon), \\
  E = U^q-U=\prod_{\epsilon \in \Fo} (U-\epsilon), \\
  F = \alpha A^2 + \beta A + \gamma =  \alpha (A-\rho_1) (A-\rho_2) \
  \text{ with } \ \rho_1,\rho_2 \in L, \\
  G = A^q+\alpha A+\delta \quad \text{and} \\
  P = D^{q+1}F^q-E^{q^2-q}G^{q+1} \in K[A,U]. 
\end{gather*}
Note that $F$ equals $Q(-A)$ (up to a scalar), so that $\deg(F) = 2$, $F$ is irreducible and $\rho_1, \rho_2 \notin K$.
We consider the curve~$C$ defined by $P = 0$ and are interested in the number of (affine) points $(a,u) \in C(K)$ with $u \notin \Ft$.
More precisely, we want to prove the following.

\begin{theorem}\label{thm:heu2}
  Let $q>61$ be a prime power that is not a power of $4$.  If the conditions%
  \vspace{-3mm}
  \begin{gather*}
    (*) \qquad \rho_1^q+\alpha\rho_2+\delta \ne 0 \\
    (**) \qquad \rho_1^q+\alpha\rho_1+\delta \ne 0
  \end{gather*}
  hold then there are at least $q^{kd-1}$ pairs $(a,u) \in K \times (K \setminus \Ft)$ satisfying $P(a,u)=0$.
\end{theorem}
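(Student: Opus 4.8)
The plan is to realise the curve $C\colon P=0$ over the function field $K(A)$, i.e. to view $P$ as a polynomial in $U$ with coefficients in $K[A]$, and to count points by a Chebotarev/Weil argument applied to the associated covering of the $A$-line. Concretely, write $P = D^{q+1}F^q - E^{q^2-q}G^{q+1}$; for a fixed value $a\in K$ with $F(a)\ne 0$ and $E$-related denominators nonvanishing, the number of $u\in K$ with $P(a,u)=0$ is governed by how the polynomial $P(a,U)\in K[U]$ splits. The key structural observation — exactly as in Lemma~\ref{lem:bchar} — is that $X^{q+1}-BX+B$ with $B = \tfrac{(a \text{-dependent expression})}{\,\cdots\,}$ splits completely over $K$ precisely when a certain element lies in $K\setminus\F_{q^2}$, so that the fibre over a ``good'' $a$ contributes roughly $q+1$ values of $u$ (minus the few in $\F_{q^2}$, which are negligible). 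Thus one reduces to: for how many $a\in K$ does the relevant $B(a)$ lie in $\mathcal B$? Equivalently, one studies the rational map $a\mapsto B(a)$ composed with the parametrisation $u\mapsto \tfrac{(u-u^{q^2})^{q+1}}{(u-u^q)^{q^2+1}}$ of $\mathcal B$ from Lemma~\ref{lem:bchar}, and counts $K$-points on the fibre product.

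First I would make the birational geometry explicit: clearing denominators, $P$ defines an absolutely irreducible plane curve over $K$ (this irreducibility is where conditions $(*)$ and $(**)$ enter — they guarantee that $F$ and $G$ share no common root behaviour that would let $P$ factor, and that the curve does not degenerate), of degree bounded by a small explicit function of $q$ (degree $q+1$ in $A$ from $G^{q+1}$, degree $q^2+q$ in $U$). Second, I would bound its geometric genus $g$: since the degree in $A$ and in $U$ are both $O(q^2)$, a crude Plücker/Newton-polygon bound gives $g = O(q^4)$ or so, and more carefully one uses the product structure $D^{q+1}F^q$ vs $E^{q^2-q}G^{q+1}$ to get a sharper bound — the ramification of $U\mapsto$ over the $A$-line is concentrated at $\epsilon\in\F_{q^2}$ (zeros of $D$, $E$) and at $\infty$, which are few. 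Third, apply the Hasse–Weil bound $\#C(K)\ge q^{kd}+1 - 2g\sqrt{q^{kd}} - (\text{points at infinity and on the discriminant locus})$; with $q>61$ and $kd\ge 18$, the term $q^{kd}$ dominates $2g\,q^{kd/2}$ by a comfortable margin because $g$ is polynomial in $q$ while $q^{kd/2}\ge q^9$, and this yields at least $q^{kd-1}$ affine points with $u\notin\F_{q^2}$ after subtracting the $O(q\cdot q^{kd/2})$ bad points and the at most $2\,q^{kd/2}$ points with $u\in\F_{q^2}$.

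The main obstacle will be the second step: proving absolute irreducibility of $P$ and getting a genus bound good enough that $2g < q^{kd/2-1}$. Irreducibility is delicate because $P$ is a difference of two ``almost $(q{+}1)$-st powers'', and naive arguments would allow a Frobenius-twist factorisation; this is precisely what hypotheses $(*),(**)$ are designed to exclude, and I expect the proof to factor $P$ over $\overline K$, analyse the putative factors' behaviour at the places $A=\rho_1,\rho_2$ and $A=\infty$, and derive a contradiction unless $(*)$ or $(**)$ fails. Once irreducibility and a bound like $g \le c\,q^2$ (or even $c\,q^4$) are in hand, the counting is routine. I would also need the elementary facts that $D,E$ have distinct roots all lying in $\F_{q^2}$, so the only ``vertical'' components to worry about are finitely many lines $A=a_0$ that I can simply exclude from the count — their number is $O(q)$, again negligible against $q^{kd-1}$.
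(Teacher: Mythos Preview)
Your overall architecture --- find an absolutely irreducible component of $C$ defined over $K$, then apply a Weil bound and subtract the negligible bad loci --- is exactly the paper's. The Weil step is routine (the paper uses the singular-curve bound $|\#C_1(K)-q^{kd}-1|\le (d_1-1)(d_1-2)q^{kd/2}$ with $d_1\le q^3+q$, which together with $kd\ge 18$ gives the conclusion). The substance of the theorem lies entirely in the irreducibility step, and here your proposal has a real gap.

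You plan to argue irreducibility by looking at the behaviour of putative factors at $A=\rho_1,\rho_2,\infty$. But $P$ has enormous symmetry that you are not using: the group $\PGL_2(\F_q)$ acts on the variable $U$ and fixes $P$ (since $D$ and $E^{q-1}$ are built from $X^q-X$). This action permutes the absolutely irreducible factors of $P$ \emph{transitively}, so the stabiliser $S_1$ of a factor $P_1$ is a subgroup of $\PGL_2(\F_q)$ of order divisible by $q-1$ (because $\deg_U P = q^3-q$ while $\deg_A P = q^2+q$, forcing $g\mid q+1$). Dickson's classification then leaves only a handful of possibilities for $S_1$: cyclic of order $q-1$, dihedral, Borel, $\PSL_2(\F_q)$, $\PGL_2(\F_{q'})$ when $q={q'}^2$, or the full group. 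A case analysis --- rewriting $P_1$ in the invariant variable for each subgroup and matching degrees --- shows that the cyclic and Borel cases force $\rho_1^q+\alpha\rho_2+\delta=0$ (i.e.\ $(*)$ fails), the $\PSL_2$ and $\PGL_2(\F_{q'})$ cases force $\rho_1=\rho_2$, and the remaining cases yield $P_1\in K[A,U]$. Note that this does \emph{not} prove $P$ is absolutely irreducible; in the dihedral case $P$ may genuinely factor, but one of its factors is already defined over $K$, which is all the Weil bound needs. Your stronger claim of absolute irreducibility is thus possibly false, and in any case your suggested local analysis at $\rho_1,\rho_2$ gives no obvious handle on why a factorisation over $\overline K$ should descend to $K$ --- that descent is exactly what the $\PGL_2$-orbit structure provides. (Minor point: your degree bookkeeping is off --- $\deg_U P = q^3-q$, not $q^2+q$, and $\deg_A P = q^2+q$, not $q+1$.)
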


The relation of the two conditions to the quadratic polynomial $Q$ as well as properties of traps are described in the following propositions.

\begin{proposition}\label{prop:cond}
  If condition $(*)$ is not satisfied, then $Q$ divides $h_1 X^q - h_0$, i.e., $Q$~is a trap of level~$0$.
  If  condition $(**)$ is not satisfied, then $Q$ divides $\smash{h_1 X^{q^{kd+1}} - h_0}$, i.e., $Q$ is a trap of level~$kd$.
  In particular, if~$Q$ is a good polynomial then conditions~$(*)$ and~$(**)$ are satisfied.
\end{proposition}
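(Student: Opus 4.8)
The plan is to push everything down to the two roots $\theta_1,\theta_2\in L=\F_{q^{2kd}}$ of $Q$, which are conjugate over $K=\F_{q^{kd}}$, and then verify the trap conditions by direct substitution; condition $(*)$ will turn out to be equivalent to $\theta_i^q=\frac{h_0}{h_1}(\theta_i)$ for $i=1,2$, and $(**)$ to $\theta_i^{q^{kd+1}}=\frac{h_0}{h_1}(\theta_i)$. First I would set up the dictionary. Since $Q\neq h_1$ and $L_Q$ is non-degenerate we have $\gcd(Q,h_1)=1$, hence $h_1(\theta_i)\neq0$, and the basis vector $(1,u_0X+u_1)\in L_Q$ gives $\frac{h_0}{h_1}(\theta_i)=-(u_0\theta_i+u_1)$. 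Because $F=\alpha A^2+\beta A+\gamma$ is a scalar multiple of $Q(-A)$, we may take $\rho_i=-\theta_i$; the labelling is immaterial, since applying $x\mapsto x^{q^{kd}}$ (which fixes $K$ and interchanges $\rho_1,\rho_2$) shows that both $(*)$ and $(**)$ are symmetric in $\rho_1,\rho_2$. As $Q$ is the minimal polynomial of $\theta_1$ over $K$ and $h_0,h_1\in\F_{q^k}[X]\subseteq K[X]$, it suffices to check each condition at $\theta_1$: one has $Q\mid h_1X^q-h_0$ iff $\theta_1^q=-(u_0\theta_1+u_1)$, and $Q\mid h_1X^{q^{kd+1}}-h_0$ iff $\theta_1^{q^{kd+1}}=-(u_0\theta_1+u_1)$. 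Finally I would record the identity used throughout: comparing $\alpha(A-\rho_1)(A-\rho_2)$ with $\alpha A^2+\beta A+\gamma$ and using $\alpha=-u_0\neq0$ gives $u_0(\rho_1+\rho_2)=\beta=u_1-v_0$.

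Now the two implications. If $(*)$ fails then $\rho_1^q=-\alpha\rho_2-\delta=u_0\rho_2+v_0$, so $\theta_1^q=(-1)^q\rho_1^q=(-1)^q(u_0\rho_2+v_0)$; a one-line computation using $u_0(\rho_1+\rho_2)=u_1-v_0$ rewrites this as $u_0\rho_1-u_1=-(u_0\theta_1+u_1)$, the cases $q$ odd and $q$ even being handled uniformly since $-1=1$ in the latter, whence $Q\mid h_1X^q-h_0$, i.e.\ $Q$ is a trap of level $0$. If $(**)$ fails then $\rho_1^q=u_0\rho_1+v_0$, and applying the $q^{kd}$-power Frobenius yields $\rho_2^q=u_0\rho_2+v_0$; since $\theta_1^{q^{kd+1}}=(\theta_1^{q^{kd}})^q=\theta_2^q=(-1)^q\rho_2^q=(-1)^q(u_0\rho_2+v_0)$, the same rewriting gives $\theta_1^{q^{kd+1}}=-(u_0\theta_1+u_1)$, whence $Q\mid h_1X^{q^{kd+1}}-h_0$, i.e.\ $Q$ is a trap of level $kd$. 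The ``in particular'' assertion is then the contrapositive: a good $Q$ has no trap roots, and good-ness encompasses traps of levels $0$ and $kd$, so a good $Q$ divides neither $h_1X^q-h_0$ nor $h_1X^{q^{kd+1}}-h_0$, forcing both $(*)$ and $(**)$ to hold.

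The individual computations are short, so the main -- and fairly mild -- obstacle is the bookkeeping: correctly matching the roots of $Q$ with $\rho_1,\rho_2$ and verifying that $(*)$ and $(**)$ are genuinely symmetric in them; carrying the factor $(-1)^q$ correctly through both parities of $q$; and justifying $h_1(\theta_i)\neq0$ (and hence the key relation $\frac{h_0}{h_1}(\theta_i)=-(u_0\theta_i+u_1)$) from the standing hypotheses that $Q\neq h_1$ and that $L_Q$ is non-degenerate.
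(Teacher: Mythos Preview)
Your proof is correct and follows essentially the same approach as the paper: a direct computation at the roots of $Q$ using the lattice relations and Vieta's identity $u_0(\rho_1+\rho_2)=u_1-v_0$. The only cosmetic difference is that the paper frames the verification via the dual-lattice basis $(u_0X+u_1,-1),(v_0X+v_1,-X)$ for $(h_0,h_1)$, whereas you extract the single relation $\tfrac{h_0}{h_1}(\theta_i)=-(u_0\theta_i+u_1)$ from $(1,u_0X+u_1)\in L_Q$ and check it directly; the underlying identities are identical.
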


\begin{proposition}\label{prop:trap}
  Let $(a,u), (a',u') \in K \times (K \setminus \Ft)$ be two solutions of $P=0$ with $a \ne a'$, corresponding to the polynomials ${\mathcal P}_a=XY+aY+bX+c$ and ${\mathcal P}_{a'}=XY+a'Y+b'X+c'$, respectively.
  Then ${\mathcal P}_a \bmod f_1$ and ${\mathcal P}_{a'} \bmod f_1$ have no common roots.  Furthermore, the common roots of ${\mathcal P}_a \bmod f_2$ and ${\mathcal P}_{a'} \bmod f_2$ are precisely the roots of~$Q$.
\end{proposition}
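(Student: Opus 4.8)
The plan is, for each of the two assertions, to suppose a common root exists and to push it through the relations defining $L_Q$ until it either collapses (first assertion) or is pinned down to~$Q$ (second assertion). Throughout I keep the notation of the degree two elimination: the solution $(a,u)$ corresponds to $\mathcal{P}_a = XY+aY+bX+c$ with $b=u_0a+v_0$ and $c=u_1a+v_1$, so that $\mathcal{P}_a\bmod f_1 = X^{q+1}+aX^q+bX+c$ and $\mathcal{P}_a\bmod f_2 = \tfrac1{h_1}N_a$ with $N_a = (X+a)h_0+(bX+c)h_1$ of degree $\le 3$. Since the two basis vectors of $L_Q$ lie in $L_Q$, I write $h_0+(u_0X+u_1)h_1 = Qp_0$ and $Xh_0+(v_0X+v_1)h_1 = Qp_1$ with $p_0,p_1\in K[X]$ of degree $\le 1$; then $N_a = Q\ell_a$ with $\ell_a := p_1+ap_0$, so $\ell_a$ is exactly the cofactor of $Q$ in $N_a$. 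I will also use the identity $Xp_0-p_1 = \kappa h_1$ with $\kappa\in K^\times$, which follows from $X\cdot Qp_0 - Qp_1 = \bigl(u_0X^2+(u_1-v_0)X-v_1\bigr)h_1$ on noting that the quadratic on the right is a nonzero scalar multiple of $Q$ (recall $Q$ is a scalar multiple of $-u_0X^2+(v_0-u_1)X+v_1$, and $u_0\ne0$ since $Q$ has degree two).

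For the first assertion I first observe that $\mathcal{P}_a\bmod f_1$ has all of its roots in $K$. Since $u\notin\Ft$ we have $D(u)\ne0$ and $E(u)\ne0$, so $P(a,u)=0$ gives either $F(a)\ne0$ and $G(a)\ne0$, or $F(a)=G(a)=0$. In the first case $c\ne ab$, $b\ne a^q$, and $B:=\tfrac{(b-a^q)^{q+1}}{(c-ab)^q} = \tfrac{(u-u^{q^2})^{q+1}}{(u-u^q)^{q^2+1}}$, which lies in $\mathcal{B}$ by Lemma~\ref{lem:bchar}; hence $\mathcal{P}_a\bmod f_1$ splits completely over $K$, by the scaling and translation of the polynomials $X^{q+1}-BX+B$, $B\in\mathcal{B}$, recalled in the discussion of degree two elimination. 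In the second case $b=a^q$, $c=a^{q+1}$, so $\mathcal{P}_a\bmod f_1 = (X+a)^{q+1}$, again with its only root in $K$. The same holds for $\mathcal{P}_{a'}\bmod f_1$. Now suppose $x_0$ is a common root of $\mathcal{P}_a\bmod f_1$ and $\mathcal{P}_{a'}\bmod f_1$. Subtracting the relations $x_0^{q+1}+ax_0^q+bx_0+c=0$ and its primed analogue, and using $b-b'=u_0(a-a')$, $c-c'=u_1(a-a')$ together with $a\ne a'$, gives $x_0^q=-(u_0x_0+u_1)$; substituting this back into the first relation collapses it to $-u_0x_0^2+(v_0-u_1)x_0+v_1=0$, i.e.\ $Q(x_0)=0$. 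But $Q$ is irreducible of degree two over $K$, so it has no root in $K$ — a contradiction. Hence there is no common root.

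For the second assertion it suffices to show that the common roots of the numerators $N_a$ and $N_{a'}$ are exactly the roots of $Q$; these roots are not roots of $h_1$, for otherwise $Q\mid h_1$ (both of degree $\le2$, $Q$ irreducible), and then $h_0+(u_0X+u_1)h_1\equiv0\pmod Q$ would force $Q\mid h_0$, contradicting $\gcd(h_0,h_1)=1$ — so nothing is lost in working with numerators. As $Q$ divides $N_a$ and $N_{a'}$, every root of $Q$ is a common root. Conversely, let $x$ be a common root of $N_a$ and $N_{a'}$ with $Q(x)\ne0$; then $\ell_a(x)=\ell_{a'}(x)=0$, and subtracting $p_1(x)+ap_0(x)=0=p_1(x)+a'p_0(x)$ with $a\ne a'$ gives $p_0(x)=0$, hence $p_1(x)=0$. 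Substituting $p_0(x)=p_1(x)=0$ into $Xp_0-p_1=\kappa h_1$ gives $h_1(x)=0$, and then into $Qp_0 = h_0+(u_0X+u_1)h_1$ gives $h_0(x)=0$, again contradicting $\gcd(h_0,h_1)=1$. So no such $x$ exists, and the assertion follows.

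The bulk of the work is the $L_Q$-bookkeeping collected in the first paragraph; once it is in place both assertions are short. The step to watch is the first one: its whole content is that a common root of the two $f_1$-reductions would be a root of the $K$-irreducible polynomial $Q$ yet lie in~$K$, so it is the splitting of $\mathcal{P}_a\bmod f_1$ — which is precisely what the hypotheses $u,u'\notin\Ft$ and $P(a,u)=P(a',u')=0$ buy — that does the real work. (Alternatively one may skip the splitting input and argue that $Q(x_0)=0$ together with $x_0^q=-(u_0x_0+u_1)=h_0(x_0)/h_1(x_0)$ forces $Q\mid h_1X^q-h_0$, i.e.\ that $Q$ is a trap of level~$0$, contradicting goodness of~$Q$.) Apart from this, the only delicate points are the degenerate case $F(a)=0$ and the possibility that $h_1$ shares a root with $Q$, each dispatched at once via coprimality of $h_0$ and $h_1$.
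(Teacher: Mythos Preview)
Your proof is correct and follows essentially the same approach as the paper: a common root of $\mathcal{P}_a$ and $\mathcal{P}_{a'}$ modulo $f_j$ must, by the linear dependence on $a$, be a root of $Q$, which for $j=1$ lies in $K$ and contradicts irreducibility. The paper's version is slightly more streamlined---it writes $\mathcal{P}_a = a\bigl(u_0X+(Y+u_1)\bigr)+\bigl((Y+v_0)X+v_1\bigr)$ and handles both $j=1,2$ at once via $-X\cdot(\text{first summand})+(\text{second summand})=F(-X)$---and you may note that your sub-case $F(a)=G(a)=0$ is actually vacuous, since $F$ is irreducible over $K$ and $a\in K$.
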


Now we explain how (for $q > 61$ not a power of~$4$) Theorem~\ref{thm-onthefly} follows from the above theorem and the propositions.
Since the irreducible quadratic polynomial~$Q$ is good, the lattice~$L_Q$ is non-degenerate so that a basis as above exists, and by Proposition~\ref{prop:cond} the two conditions of Theorem~\ref{thm:heu2} are satisfied.
The map of Lemma~\ref{lem:bchar} is $q^3-q:1$ on $K \setminus \Ft$, hence there are at least $q^{kd-4}$ solutions $(a,B) \in K \times \mathcal{B}$ of~(\ref{eq:deg2elim}), which contain at least $q^{kd-4}$ different values $a \in K$.
Observe that a trap root~$\tau$ that may occur in this situation is a root of $h_1 X^q - h_0$, or of $\smash{h_1 X^{{q^{k d'} + 1}} - h_0}$ for $d' \mid \frac d 2$, or
it satisfies $\frac{h_0} {h_1} (\tau) \in \F_{q^{k d / 2}}$.
The cardinality of~these trap roots is at most $\smash{q^{\frac{kd}2+3}}$.
By Proposition~\ref{prop:trap} a trap root can appear in ${\mathcal P}_a \bmod f_j$ for at most two values $a$, at most once for $j=1$ and at most once for $j=2$.
Hence there are at most $\smash{q^{\frac{kd}2+4}} \le q^{kd-5}$ values $a$ for which a trap root appears in ${\mathcal P}_a \bmod f_j$, $j=1,2$.
Thus there are at least $q^{kd-5}$ different values $a$ for which a solution $(a,B)$ leads to an elimination into good polynomials.
This finishes the proof of Theorem~\ref{thm-onthefly}, hence we focus on proving the theorem and the two propositions above.

\subsection{Outline of the proof method}

The main step of the proof of the theorem consists in showing that, subject to conditions $(*)$ and $(**)$, there exists an absolutely irreducible factor $P_1$ of $P$ that lies already in $K[A,U]$.
Since the (total) degree of $P_1$ is at most $q^3+q$, restricting to the component of the curve defined by $P_1$ and using the Weil bound for possibly singular plane curves gives a lower bound on the cardinality of $C(K)$ which is large enough to prove the theorem after accounting for projective points and points with second coordinate in $\Ft$.
This argument is given in the next subsection before dealing with the more involved main step.

For proving the main step the action of $\PGL_2(\Fo)$ on the variable $U$ is considered.  An absolutely irreducible factor $P_1$ of $P$ is stabilised by a subgroup $S_1 \subset \PGL_2(\Fo)$ satisfying~some conditions.
The first step is to show that, after possibly switching to another absolutely irreducible factor, there are only a few cases for the subgroup.  Then for each case it is shown that the factor is defined over $K[A,U]$ or that one of the conditions on the parameters is not satisfied.

The propositions are proven in the final subsection.

\subsection{Weil bound}

Let~$C_1$ be the absolutely irreducible plane curve defined by $P_1$ of degree~$d_1 \le q^3 + q$.  Corollary~2.5 of~\cite{singweil} shows that
\[ |\#C_1(K)-q^{kd}-1| \le (d_1-1)(d_1-2)q^\frac{kd}2 . \]
Since $\deg_A(P_1) \le q^2+q$ there are at most $q^4 + q^3$ affine points with $u \in \Ft$.
The number of points at infinity is at most $d_1 \le q^3+q < q^4$.
Denoting by $C_1(K){\widetilde{\,\,}}$ the set of affine points in $C_1(K)$ with second coordinate $u \not\in \Ft$ one obtains
\[ |\#C_1(K){\widetilde{\,\,}}| > q^{kd} - (q^4+q^3) - d_1 - (d_1-1) (d_1-2) q^\frac{kd}2 > q^{kd} - q^{\frac{kd}2+8} \ge q^{kd-1} , \]
since $kd \ge 18$, thus proving the theorem if there exists an absolutely irreducible factor~$P_1$ defined over $K[A,U]$.

\subsection{$\PGL_2$ action}\label{ssec:pgl}

Here the following convention for the action of $\PGL_2(\Fo)$ on $\PP^1$ and on polynomials is used.
A matrix $\matt{a}{b}{c}{d} \in \PGL_2(\Fo)$ acts on $\PP^1(M)$, where $M$ is an arbitrary field containing $\F_q$, by \[ (x_0:x_1) \mapsto \matt{a}{b}{c}{d}(x_0:x_1)=(ax_0+bx_1:cx_0+dx_1) \] or, via $\PP^1(M)=M \cup \{\infty\}$, by $x \mapsto \frac{ax+b}{cx+d}$.
This is an action on the left, i.e., for $\sigma,\tau \in \PGL_2(\Fo)$
and $x \in \PP^1(M)$ the following holds:
$\sigma (\tau(x)) = (\sigma \tau)(x)$.
On a homogeneous polynomial $H$ in the variables $(X_0:X_1)$ the action
of $\sigma=\matt{a}{b}{c}{d}$
is given by $H^\sigma(X_0:X_1)=H(aX_0+bX_1:cX_0+dX_1)$.
This is an action on the right, satisfying
$H^{(\sigma\tau)}=(H^\sigma)^\tau$.
In the following we will usually use this action on the dehomogenised
polynomials given by $H^\sigma(X)=H(\frac{aX+b}{cX+d})$, clearing
denominators in the appropriate way.

The polynomial $P \in (K[A])[U]$ is invariant under $\PGL_2(\Fo)$ acting on the variable $U$; this can be seen by considering the actions of $\matt{a}{0}{0}{1}$, $\matt{1}{b}{0}{1}$ and $\matt{0}{1}{1}{0}$, and noticing that $\PGL_2(\Fo)$ is generated by these matrices. 
 Let
\[ P = s\prod_{i=1}^g P_i, \qquad P_i \in (\overline{K}[A])[U], \ s \in \overline{K}[A], \]
be the decomposition of $P$ in $(\overline{K}[A])[U]$ into irreducible factors $P_i$ and possibly reducible $s$.
Notice that $s$ must divide $F^q$ and $G^{q+1}$, hence it divides a power of $\gcd(F,G)$.  As~$F$ is irreducible, $\gcd(F,G)$ is either constant or of degree two.  In the latter case $\rho_1$ is a root of $G$ contradicting condition $(**)$.
Therefore one can assume that $s \in \overline{K}$ is a constant.

Let 
\[ P = F^q \prod_{i=1}^{q^3-q} (U-r_i), \qquad r_i \in \overline{K(A)}, \]
be the decomposition of $P$ in $\overline{K(A)}[U]$.
Then $\PGL_2(\Fo)$ permutes the set $\{r_i\}$ and, since fixed points of $\PGL_2(\Fo)$ lie in $\Ft$ but $r_i \notin \Ft$, the action is free.  Since $\#\PGL_2(\Fo)=q^3-q$ the action is transitive.

Therefore the action on the decomposition over $\overline{K}[A,U]$ is also transitive (adjusting the~$P_i$ by scalars in $\overline{K}[A]$ if necessary).
Denoting by $S_i \subset \PGL_2(\Fo)$ the stabiliser of $P_i$ it follows that all $S_i$ are conjugates of each other, thus they have the same cardinality and hence $q^3-q=g\cdot \#S_i$.
Moreover the degree of~$P_i$ in~$U$ is constant, namely $\deg_U (P_i) = \#S_i$, and also the degree of~$P_i$ in~$A$
is constant, thus $g \mid q^2+q=\deg_A(P)$.  In particular, $q-1 \mid \#S_i$ and $\deg_A (P_i) = \smash{\frac{\# S_i} {q - 1}}$.

\subsection{Subgroups of $\PGL_2$}

The classification of subgroups of $\PSL_2(\Fo)$ is well known~\cite{Dickson} and allows to determine all subgroups of $\PGL_2(\Fo)$~\cite{Cameron}.
Since $\#S_i$ is divisible by $q-1$ (in particular $\#S_i>60$), only the following subgroups are of interest (per conjugation class only one subgroup is listed):
\begin{enumerate}[\quad 1.]
\item the cyclic group $\matt{*}{0}{0}{1}$ of order $q-1$,
\item the dihedral group $\matt{*}{0}{0}{1} \cup \matt{0}{1}{*}{0}$
  of order $2(q-1)$ and, if~$q$ is odd, its two dihedral subgroups
\begin{gather*}
  \Big\{\matt{a}{0}{0}{1} \mid \text{$a \ne 0$ a square}\Big\} \cup \Big\{\matt{0}{1}{c}{0} \mid \text{$c \ne 0$ a square}\Big\} \quad \text{and} \\
  \Big\{\matt{a}{0}{0}{1} \mid \text{$a \ne 0$ a square}\Big\} \cup \Big\{\matt{0}{1}{c}{0} \mid \text{$c$ not a square}\Big\},
\end{gather*}
both of order $q-1$,
\item the Borel subgroup $\matt{*}{*}{0}{1}$ of order $q^2-q$,
\item if $q$ is odd, $\PSL_2(\Fo)$ of index $2$,
\item if $q=q'^2$ is a square, $\PGL_2(\F_{q'})$ of order $q'^3-q'=q'(q-1)$,
and
\item $\PGL_2(\Fo)$.
\end{enumerate}

In the last case $P$ is absolutely irreducible, thus it remains to investigate the first five cases which are treated in the next subsection.

Remark: The condition $q>61$ rules out some small subgroups as $A_4$, $S_4$, and~$A_5$.  In many of the finitely many cases $q \le 61$ the proof of the theorem also works (e.g., $q$ not a square and $q-1 \nmid 120$).  The condition of $q$ not being a power of even exponent of $2$ eliminates the fifth case in characteristic $2$; removing this condition would be of some interest.

\subsection{The individual cases}

Since the stabilisers $S_i$ are conjugates of each other,
one can assume without loss of generality that $S_1$ is one of the
explicit subgroups given in the previous subsection.
Then the polynomial $P_1$ is invariant under certain transformations
of $U$, so that $P_1$ and $P$ can be rewritten in terms of another
variable as stated in the following.

If a polynomial (in the variable $U$) is invariant under
$U \mapsto aU$, $a \in \F_q^{\times}$, it can be considered as a polynomial
in the variable $V=U^{q-1}$.
For the polynomials~$D$ and~$E^{q-1}$ one obtains
$$
D=\frac{V^{q+1}-1}{V-1} \qquad \text{and} \qquad
E^{q-1}=V(V-1)^{q-1}.
$$

Similarly, in the case of odd $q$, if a polynomial is invariant under 
$U \mapsto aU$ for all squares $a \in \F_q^{\times}$, it can be
rewritten in the variable $V'=U^{\frac{q-1}2}$.
For $D$ and $E^{q-1}$ this gives
$$
D=\frac{V'^{2q+2}-1}{V'^2-1} \qquad \text{and} \qquad
E^{q-1}=V'^2(V'^2-1)^{q-1}.
$$

If a polynomial is invariant under $U \mapsto U+b$, $b \in \Fo$,
it can be considered as a polynomial in $\tilde{V}=U^q-U$ which gives
$$
D=\tilde{V}^{q-1}+1 \qquad \text{and} \qquad
E^{q-1}=\tilde{V}^{q-1}.
$$

Combining the above yields that a polynomial which is invariant under both
$U \mapsto aU$, $a \in \F_q^{\times}$, and
$U \mapsto U+b$, $b \in \Fo$, can be considered as a polynomial
in $W=\tilde{V}^{q-1}=(U^q-U)^{q-1}$.
For $D$ and $E^{q-1}$ one obtains
$$
D=W+1 \qquad \text{and} \qquad
E^{q-1}=W.
$$

This is now applied to the various cases for $S_1$.

\subsubsection{The cyclic case}

Rewriting $P$ and $P_1$ in terms of $V=U^{q-1}$ one obtains
$$
P = \Big( \frac{V^{q+1}-1}{V-1} \Big)^{q+1}F^q-V^q(V-1)^{q^2-q}G^{q+1}
$$
\sloppy and $\deg_V(P_1)=1$, i.e., $P_1=p_1V-p_0$ with
$p_i \in \overline{K}[A]$, $\gcd(p_0,p_1)=1$,
$\max(\deg(p_0),\deg(p_1))=1$ and
it can be assumed that $p_0$ is monic.

The divisibility $P_1 \mid P$ transforms into the following
polynomial identity in $\overline{K}[A]$:
$$
\Big( \frac{p_0^{q+1}-p_1^{q+1}}{p_0-p_1} \Big)^{q+1} F^q =
p_1^q p_0^q (p_0-p_1)^{q^2-q} G^{q+1}.
$$
The degree of the first factor on the left hand side is either
$q^2+q$ or $q^2-1$ (if $p_0-\zeta p_1$ is constant for some
$\zeta \in \mu_{q+1}(\Ft) \setminus \{1\}$).
Since the degrees of the other factors are all divisible by $q$, the
latter case is impossible.
Since $\deg(F) = 2$ one gets $\deg(F^q) = 2q$.
Furthermore, $\deg((p_0p_1)^q) \in \{q,2q\}$,
$\deg((p_0-p_1)^{q^2-q}) \in \{0,q^2-q\}$ and $\deg(G^{q+1})=q^2+q$ which
implies $\deg(p_0-p_1)=0$, $\deg(p_0)=\deg(p_1)=1$ since $q>2$.

Let $p_0-p_1=c_1 \in \overline{K}$; in the following $c_i$ will be
some constants in $\overline{K}$.
Since the first factor on the left hand side is coprime to
$p_0p_1$, it follows
$$
\frac{p_0^{q+1}-p_1^{q+1}}{p_0-p_1} = c_2G,\quad F=c_3p_0p_1
\quad \text{and} \quad c_2^{q+1}c_3^q=c_1^{q^2-q}.
$$
Exchanging $\rho_1$ and $\rho_2$, if needed, one obtains
$$
p_0=A-\rho_1,\quad p_1=A-\rho_2,\quad c_3=\alpha
\quad \text{and} \quad c_1=\rho_2-\rho_1.
$$
Considering the coefficient of $A^q$ in the equation for $G$ gives $c_2=1$
and evaluating this equation at $A=\rho_2$ gives
$$
\rho_1^q+\alpha \rho_2 + \delta=0.
$$
This means that condition $(*)$ does not hold.

\subsubsection{The dihedral cases}

The case of the dihedral group of order $2(q-1)$ is considered first.
Then, as above, $P$ and $P_1$ can be expressed in terms of $V$, and,
since $P$ and $P_1$ are also invariant under $V \mapsto \frac1V$, they
can be expressed in terms of $W_+=V+\frac1V$.
This gives $\deg_{W_+}(P_1)=1$
and with ${\mathcal Z}=\mu_{q+1}(\Ft) \setminus \{1\}$
$$
D^{q+1}V^{-\frac{q^2+q}2}=
\prod_{\zeta \in {\mathcal Z}}
(W_+ -(\zeta+\zeta^q))^{\frac{q+1}2} \qquad \text{and}
$$
$$
PV^{-\frac{q^2+q}2}=
\Big( \prod_{\zeta \in {\mathcal Z}}
(W_+ -(\zeta+\zeta^q))^{\frac{q+1}2} \Big) F^q
-(W_+ -2)^{\frac{q^2-q}2}G^{q+1}.
$$
In characteristic $2$ each factor of the product over ${\mathcal Z}$
appears twice, thus justifying their exponent~$\frac{q+1}2$.

By writing $P_1=p_1W_+ -p_0$, with $p_i \in \overline{K}[A]$, $\gcd(p_0,p_1)=1$,
$\max(\deg(p_0),\deg(p_1))=2$ and~$p_0$ being monic,
the divisibility $P_1 \mid P$ transforms into the following
polynomial identity in~$\overline{K}[A]$:
$$
\Big( \prod_{\zeta \in {\mathcal Z}}
(p_0 -(\zeta+\zeta^q)p_1)^{\frac{q+1}2} \Big) F^q =
p_1^q (p_0-2p_1)^{\frac{q^2-q}2} G^{q+1}.
$$
Again the degree of the first factor on the left hand side must be divisible
by $q$ (respectively, $\frac{q}2$ in characteristic $2$),
and since $p_0-(\zeta+\zeta^q)p_1$ can be constant or linear for
at most one sum $\zeta+\zeta^q$,
the degree of the first factor must be $q^2+q$ for $q>4$.
Also the degree of $p_0-2p_1$ must be zero since $q>3$ and
thus the degree of $p_1$ is~$2$.

In even characteristic $p_0-2p_1=p_0$ is a constant, thus $p_0=1$
($p_0$ is monic).
The involution $\zeta \mapsto \zeta^q=\zeta^{-1}$ on ${\mathcal Z}$
has no fixed points, and, denoting by ${\mathcal Z}_2$ a set of representatives
of ${\mathcal Z}$ modulo the involution, one obtains
$$
\prod_{\zeta \in {\mathcal Z}_2} (1-(\zeta+\zeta^q)p_1)=c_1G,\quad
F=c_2p_1
\quad \text{and} \quad c_1^{q+1}c_2^q=1.
$$
Modulo $F$ one gets $F \mid c_1G-1$ which implies $c_1 \in K$.
Thus $c_2 \in K$, $p_1 \in K[A]$ and therefore
$P_1 \in K[A,U]$.

In odd characteristic the factor corresponding to $\zeta=-1$,
namely $(p_0+2p_1)^{\frac{q+1}2}$, is coprime to the other factors
in the product and coprime to $p_1(p_0-2p_1)$.
Hence $p_0+2p_1$ must be a square and its square root must divide $G$.
Moreover, one gets $F=c_1p_1$.
Since $p_0-2p_1=c_2$ is a constant and $p_0$ is monic,
one gets $c_1=2\alpha$, implying $p_1 \in K[A]$.
Since $p_0+2p_1=4p_1+c_2$ is a square, its discriminant is zero,
thus $c_2 \in K$ and hence $P_1 \in K[A,U]$.

If $S_1$ is one of the two dihedral subgroups of order $q-1$ 
(which implies that $q$ is odd), the argumentation is similar.
The polynomials $P$ and $P_1$ are expressed in terms of $V'=U^{\frac{q-1}2}$
and then,
since $U \mapsto \frac1{cU}$ becomes $V' \mapsto c^{-\frac{q-1}2}\frac1{V'}$
with $c^{-\frac{q-1}2}=\pm 1$,
in terms of $W'_+=V'+\frac1{V'}$ or $W'_-=V'-\frac1{V'}$,
respectively.
In the first case $P$ is rewritten as
$$
PV'^{-(q^2+q)}=
\Big( \prod_{\zeta \in {\mathcal Z}'}
(W'_+ -(\zeta+\zeta^{-1}))^{\frac{q+1}2} \Big) F^q
-(W'_+ -2)^{\frac{q^2-q}2}(W'_+ +2)^{\frac{q^2-q}2}G^{q+1}
$$
where ${\mathcal Z}'=\mu_{2(q+1)}(\Ft) \setminus \{\pm 1\}$.
By setting $P_1=p_1W'_+ -p_0$ with $p_i \in \overline{K}[A]$,
$\gcd(p_0,p_1)=1$, $\max(\deg(p_0),\deg(p_1))=1$ and $p_0$ being monic,
one obtains
$$
\Big( \prod_{\zeta \in {\mathcal Z}'}
(p_0 -(\zeta+\zeta^{-1})p_1)^{\frac{q+1}2} \Big) F^q =
p_1^{2q} (p_0-2p_1)^{\frac{q^2-q}2} (p_0+2p_1)^{\frac{q^2-q}2} G^{q+1}.
$$
Since one of $p_0 \pm 2p_1$ is not constant, the degree of the right
hand side exceeds the degree of the left hand side for $q>5$ which
is a contradiction.

In the second case $P$ is rewritten as
$$
PV'^{-(q^2+q)}=
\Big( \prod_{\zeta \in {\mathcal Z}'}
(W'_- -(\zeta-\zeta^{-1}))^{\frac{q+1}2} \Big) F^q
-W_-'^{q^2-q} G^{q+1}
$$
and by setting $P_1=p_1W'_- -p_0$ with $p_i \in \overline{K}[A]$,
$\gcd(p_0,p_1)=1$, $\max(\deg(p_0),\deg(p_1))=1$ and $p_0$ being monic,
one obtains
$$
\Big( \prod_{\zeta \in {\mathcal Z}'}
(p_0 -(\zeta-\zeta^{-1})p_1)^{\frac{q+1}2} \Big) F^q =
p_1^{2q} p_0^{q^2-q} G^{q+1}.
$$
Considering the degrees for $q>3$ it follows that $p_0$ must be constant
and hence $p_1$ is of degree one.
Since $p_1$ is coprime to the first factor on the left hand side,
it must divide $F^q$ which implies $\rho_1=\rho_2 \in K$, contradicting the irreducibility of~$F$.

\subsubsection{The Borel case}

In this case, rewriting $P$ and $P_1$ in terms of $W=(U^q-U)^{q-1}$
gives
$$
P=(W+1)^{q+1}F^q-W^qG^{q+1}
$$
and $\deg_W(P_1)=1$, $P_1=p_1W-p_0$, with $p_i \in \overline{K}[A]$,
$\gcd(p_0,p_1)=1$, $\max(\deg(p_0),\deg(p_1))=q$ and $p_1$ being monic.
Then the divisibility $P_1 \mid P$ transforms into the following
polynomial identity in $\overline{K}[A]$:
$$
(p_0+p_1)^{q+1} F^q = p_1 p_0^q G^{q+1}.
$$
From $\deg(G^{q+1})=q^2+q$, $\deg(p_1 p_0^q)\ge q$ and $\deg(F^q) = 2q$
it follows that the degree of $p_0+p_1$ must be $q$.
This implies $\deg(F^q)=\deg(p_1 p_0^q)$, thus $\deg(p_0) \le 2$ and
therefore $\deg(p_1)=q$,  since $q>2$, and $\deg(p_0) = 1$.

Since $p_0+p_1$ is coprime to $p_0p_1$, it follows
$$
p_0+p_1 = c_1G,\quad p_1=\tilde{p}^q,\quad F=c_2\tilde{p}p_0
\quad \text{and} \quad c_1^{q+1}c_2^q=1
$$
for a monic linear polynomial $\tilde{p} \in \overline{K}[A]$.

Exchanging $\rho_1$ and $\rho_2$, if needed, one obtains
$$
\tilde{p}=A-\rho_1,\quad p_0=c_3(A-\rho_2),\quad c_1=1,\quad c_2=1
\quad \text{and} \quad c_3=\alpha.
$$
Evaluating $p_0+p_1 = G$ at $A=0$ gives
$$
\rho_1^q + \alpha \rho_2 + \delta = 0.
$$
This means that condition $(*)$ does not hold.

\subsubsection{The $\PSL_2$ case}

This case can only occur for odd $q$, and
then $P$ splits as $P=sP_1P_2$ with a scalar $s \in \overline{K}$.
The map $U \mapsto aU$ for a non-square $a \in \Fo$
exchanges $P_1$ and $P_2$.
Since $\PSL_2(\Fo)$ is a normal subgroup of $\PGL_2(\Fo)$, $P_2$ is
invariant under $\PSL_2(\Fo)$ as well.
By rewriting $P$ in terms of $W'=(U^q-U)^{\frac{q-1}2}$ one obtains
$$
P=(W'^2+1)^{q+1}F^q-W'^{2q}G^{q+1}=sP_1(W')P_1(-W').
$$
Denoting by $p_0 \in \overline{K}[A]$ the constant coefficient of 
$P_1 \in (\overline{K}[A])[W']$ this becomes modulo $W'$
$$
F^q=sp_0^2
$$
which implies $\rho_1=\rho_2 \in K$, contradicting the irreducibility of~$F$.

\subsubsection{The case $\PGL_2(\F_{q'})$}

Since $\PGL_2(\F_{q'}) \subset \PSL_2(\Fo)$ in odd characteristic,
one can reduce this case to the previous case as follows.

Let $I_1 \subset \{1,\ldots,g\}$ be the subset of $i$ such that
$S_i$ is a conjugate of $S_1$ by an element in $\PSL_2(\Fo)$, and let
$I_2=\{1,\ldots,g\} \setminus I_1$.
These two sets correspond to the two orbits of the action of
$\PSL_2(\Fo)$ on the $S_i$ (or $P_i$).
Both orbits contain $\#I_1=\#I_2=\frac{g}2$ elements and an
element in $\PGL_2(\Fo) \setminus \PSL_2(\Fo)$ transfers one orbit
into the other.

Let $\tilde{P}_j=\prod_{i \in I_j} P_i$, $j=1,2$, then
$P$ splits as $P=s\tilde{P}_1\tilde{P}_2$, $s \in \overline{K}$, and both
$\tilde{P}_j$, $j=1,2$, are invariant under $\PSL_2(\Fo)$.
Notice that the absolute irreducibility of $P_1$ and $P_2$ was not
used in the argument in the $\PSL_2$ case.

This completes the proof of Theorem~\ref{thm:heu2}.

\subsection{Traps}

In the following Proposition~\ref{prop:cond} and Proposition~\ref{prop:trap} are proven.

Let~$Q$ be an irreducible quadratic polynomial in $K[X]$ such that $(1, {u_0 X + u_1}), (X, {v_0 X + v_1})$ is a basis of the lattice $L_Q$, so that $Q$ is a scalar multiple of $-u_0 X^2 + (-u_1 + v_0) X + v_1 = F(-X)$ and has roots~$-\rho_1$ and~$-\rho_2$.
By definition of $L_Q$ the pair $(h_0,h_1)$ must be in the dual lattice (scaled by $Q$), given by the basis $(u_0 X + u_1, -1), (v_0 X + v_1, -X)$.

For the assertions concerning conditions $(*)$ and $(**)$, assume that $\rho_1,\rho_2 \in L \setminus K$ and that
\[ \rho_1^q + \alpha \rho_j + \delta = 0 \]
holds for $j=1$ or $j=2$.

First consider the case $j=2$, i.e., condition $(*)$.  To show that $-\rho_i$, $i=1,2$, are roots of $h_1 X^q - h_0$ it is sufficient to show this for the basis of the dual lattice of $L_Q$ given above.
For $(u_0 X + u_1, -1)$ one computes
\[ -(-\rho_1^q) - u_0 (-\rho_1) - u_1 = \rho_1^q - \alpha \rho_1 - \beta + \delta = -\alpha \rho_2 - \alpha \rho_1 - \beta = 0 , \]
and for $(v_0 X + v_1, -X)$ one obtains
\[ -(-\rho_1) (-\rho_1^q) - v_0 (-\rho_1) - v_1 = (-\rho_1^q - \delta) \rho_1 - \gamma = \alpha \rho_1 \rho_2 - \gamma = 0 . \]
Therefore $h_1 X^q - h_0$ is divisible by~$Q$, which is then a trap of level $0$.

In the case $j=1$ an analogous calculation shows that $-\rho_i$, $i=1,2$, are roots of $h_1 X^{q^{kd+1}} - h_0$, namely for $(u_0 X + u_1, -1)$ one has
\[ -(-\rho_2^{q^{kd+1}}) - u_0(-\rho_2) - u_1 = \rho_1^q - \alpha \rho_2 - \beta + \delta = -\alpha \rho_1 - \alpha \rho_2 - \beta = 0 \]
and for $(v_0 X + v_1, -X)$ one gets
\[ -(-\rho_2) (-\rho_2^{q^{kd+1}}) - v_0(-\rho_2) - v_1 = (-\rho_1^q - \delta) \rho_2 - \gamma = \alpha \rho_1 \rho_2 - \gamma = 0 \]
Therefore $h_1 X^{q^{kd+1}} - h_0$ is divisible by~$Q$, which is then a trap of level~$kd$.  This finishes the proof of Proposition~\ref{prop:cond}.

Regarding Proposition~\ref{prop:trap}, note that a solution $(a,B)$ gives rise to
the polynomial ${\mathcal P}_a=a(u_0X+(Y+u_1))+((Y+v_0)X+v_1)$.
If, for $j=1$ or $j=2$, $\rho$ is a root of ${\mathcal P}_a \bmod f_j$ for two different values of $a$, then $\rho$ is a root of $u_0X+(Y+u_1) \bmod f_j$ and of $(Y+v_0)X+v_1 \bmod f_j$.  Since
\[ -X(u_0X+(Y+u_1)) + (Y+v_0)X+v_1 = -u_0X^2 + (-u_1+v_0)X +v_1 = F(-X), \]
which equals~$Q$ up to a scalar, it follows that $\rho$ is also a root of~$Q$.  Furthermore, in the case $j=1$ the polynomial ${\mathcal P}_a \bmod f_1$ splits completely, so that $\rho \in K$, contradicting the irreducibility of $Q$,
finishing the proof of Proposition~\ref{prop:trap}.  

This completes the proof of Theorem~\ref{thm-onthefly}.


\section*{Acknowledgements}

The authors are indebted to Claus Diem for explaining how one can obviate the need to compute the logarithms of the factor base elements, and wish to thank him also for some enlightening discussions.

\bibliographystyle{plain}
\bibliography{dl_bib}

\end{document}